\theoremstyle{definition}
\newtheorem{theorem}{Theorem}[section]
\newtheorem{lemma}[theorem]{Lemma}
\newtheorem{proposition}[theorem]{Proposition}
\newtheorem{corollary}[theorem]{Corollary}
\theoremstyle{definition}
\theoremstyle{definition}
\newtheorem{definition}[theorem]{Definition}
\newtheorem{remark}[theorem]{Remark}
\DeclareMathOperator{\im}{im}
\def\Z{\mathbb{Z}}
\def\C{\mathbb{C}}
\def\S{\mathbb{S}}
\begin{document}
\baselineskip=17pt
\title[]{Fixed Point Free Actions of Spheres and Equivariant Maps}
\author[Anju Kumari and Hemant Kumar Singh]{ Anju Kumari and Hemant Kumar Singh}

\address{{\bf Anju Kumari}, 
	Department of Mathematics, University of Delhi,
	Delhi -- 110007, India.}
\email{anjukumari0702@gmail.com}
\address{{\bf Hemant Kumar Singh}, 
	Department of Mathematics, University of Delhi,
	Delhi -- 110007, India.}
\email{hemantksingh@maths.du.ac.in}
\date{}
\thanks{This paper is supported by the Science and Engineering Research Board (Department of Science and Technology, Government of India) with reference number- EMR/2017/002192}
\begin{abstract} 
This paper generalizes the concept of index and co-index and some related results for free actions of $G=\S^0$ on a paracompact Hausdorff space which were introduced by Conner and Floyd\cite{ConnerFloyd1960}. We define the index and co-index of a finitistic free $G$-space $X$, where $G=\S^d$, $d=1$ or 3 and prove that the index of $X$ is not more than the mod 2 cohomology index of $X$. We observe that the index and co-index  of a   $(2n+1)$-sphere $\mathbb{S}^{2n+1}$(resp. $(4n+3)$-sphere $\mathbb{S}^{4n+3})$ for the action of componentwise multiplication of $G=\S^1$ (resp. $\S^3$) is $n$.

We also  determine the orbit spaces of free actions of $G=\S^3$ on a finitistic space $X$ with the mod 2 cohomology  and the rational cohomology product of spheres   $\mathbb{S}^n\times\mathbb{S}^m, 1\leq n\leq m$. The orbit spaces of circle actions on the mod 2 cohomology $X$ is also discussed. Using these  calculation, we obtain  an upper bound of the index of $X$ and the Borsuk-Ulam type results.
\end{abstract}
\subjclass[2010]{Primary 55R25; Secondary 55R10, 55N05 }

\keywords{Free action; Finitistic space; Gysin sequence; Characteristic classes; Classifying maps.}

\maketitle

\section {Introduction} Firstly, in 1954, C. T. Yang\cite{Yang1954} defines an index for compact Hausdorff spaces with free involutions using Smith (co)homology to study mappings from spheres to euclidean spaces like Borsuk-Ulam theorem and extension of Dyson's theorem.   In 1960, Conner and Floyd\cite{ConnerFloyd1960} defines  $\mathbb{Z}_2$-index (which is Yang's B-index\cite{Yang1955}), co-index and homology index, and  also discussed the stability of index for Hausdorff spaces. In 1962\cite{ConnerFloyd1962}, they discussed the co-index of space of paths $P(\S^n)$ except for some values of $n$. In 1972, Jack Ucci\cite{Jack} discussed the co-index for remaining values.  In 1988, Fadell and Husseni\cite{Husseni1988}  introduced the ideal valued index for  free Lie group actions on paracompact spaces. After this many author generalized index in different ways.  In 1989, Stolz\cite{Stolz1989} studied the Conner and Floyd's index for real projective spaces. Volvikov\cite{Volvikov2000}(2000)  defined numerical index $i_G(X)$ using spectral sequences and ideal valued index  with filtration for a free $G$-space  using the Borel construction for  compact Lie group $G$.  Jaworowski\cite{Jaworowski2002}(2002) defines $G$-index ($G=\S^1$ or $\S^3$) with integer  coefficients similar to Conner and Floyd's\cite{ConnerFloyd1960} homology index  and  proved that the index of $(2n+1)$-lens space is  $n$. We call Jaworowski's  $G$-index with $\Z_2$ coefficient as the mod 2 cohomology $G$-index of $X$\cite{Hemant2012,JKaur2015}. Tanaka\cite{Tanaka2003,Tanaka2007} defined index and co-index for vector bundles and its properties. In 2005, Yasuhiro Hara\cite{Hara2005} studies equivariant maps between Stiefel manifolds  using index. In 2013, Satya Deo\cite{Deo2013} proved that the numerical index   for a finitistic space with  $p$-torus actions  is finite. In 2014, Benjamin  Matschke\cite{Matschke} defined ideal valued index using spectral sequences.  We generalize Conner and Floyd's\cite{ConnerFloyd1960} index and co-index and related standard results to index and co-index  for finitistic  space $X$ with free  actions of  $G=\S^d$, $d=1$ or 3.  We also prove that index of $X$ is not mare than  the mod 2 cohomology index of $X$.

 H. Hopf in 1925-26,  raised the question to classify all manifolds whose universal cover is $\S^n$. It is equivalent to determine the orbit spaces $\S^n/G$, where $G$ is a finite cyclic group.  The orbit spaces of $n$-sphere $\S^n$ with finite group actions have been studied in \cite{Livesay,Rice,Ritter1973,Rubinstein}. In 1963,  J. C. Su\cite{Su1963} computed the orbit spaces of spheres for free circle actions.  For the product of spheres $\S^n\times \S^m$, the orbit spaces of $G=\Z_p$, $p$ a prime with $\Z_p$-coefficients or $\S^1$-actions with rational coefficients are discussed in \cite{Dotzel,Ritter1974,Tao}. In 1972, Ozeki and Uchida\cite{Ozeki} determine the  orbit space of free circle action on a manifold with integral cohomology $\S^{2n+1}\times\S^{2m+1}$.    J Kaur et.al\cite{JKaur2015}(2015) studied the fixed point free $\S^3$-actions on spheres. We generalize the discussion for free actions of $G=\S^3$ on $n$-sphere $\S^n$ to a finitistic space $X$ with the mod 2 cohomology and the rational cohomology isomorphic to the product of spheres $\S^n\times \S^m,1\leq n\leq m$, and determined the cohomological structure of the orbit spaces. The orbit spaces of $X$ with free actions of $G=\S^1$ with $\Z_2 $ coefficients is also discussed. Using these  calculations, we obtain an upper bound of the index of $X$ and  Borsuk-Ulam type results.
\section {Preliminaries}

Let $G$ be a compact Lie group. Then by using Milnor's construction\cite{dale fibre bundle}, there exist a universal principal $G$-bundle $G\hookrightarrow E_G\to B_G$. If $X$ is a free $G$-space then the associated bundle $X\stackrel{i}{\hookrightarrow} X_G= \frac{X\times E_G}{G}\stackrel{\pi}{\longrightarrow} B_G $ is  a fibre bundle with fibre $G$ where $G$ acts on $X\times E_G$ by the diagonal action. This associated fibre bundle becomes a  fibration as $B_G$ is a paracompact space and this fibration is called the Borel fibration. Then, there exist Leray-Serre spectral sequence associated
 to this Borel fibration $X\hookrightarrow X_G\to B_G$ which has $E_2^{k,l}=H^k(B_G;\mathcal{H}^l(X))$,
 the cohomology of the base $B_G$ with local coefficients in the cohomology of the fibre of map $\pi$. Note that if $\pi_1(B_G)$ acts trivially on $X $ then $E_2^{k,l}=H^k(B_G)\otimes H^l(X)$.
  \begin{proposition}\label{5p2}\cite[Theorem 5.9]{Mc}
 	Let $X\stackrel{i}{\hookrightarrow} X_G \stackrel{\pi}{\longrightarrow} B_G$ be the Borel fibration. Suppose that the system of local coefficients on $B_G$ is simple, then the edge homomorphisms
 	are the homomorphisms $\pi^*: H^k(B_G) \to H^k(X_G) ~ ~ ~ \textrm{and} ~ ~ ~ i^*: H^l(X_G)  \to H^l(X).$
 \end{proposition}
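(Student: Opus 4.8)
The plan is to identify the two abstract edge homomorphisms coming from the filtration on $H^*(X_G)$ with the geometric maps $\pi^*$ and $i^*$, using the first-quadrant structure of the spectral sequence together with naturality under morphisms of fibrations. Recall that the Leray--Serre spectral sequence of the Borel fibration arises from a decreasing filtration $F^\bullet H^n(X_G)$ with $E_\infty^{p,q}\cong F^pH^{p+q}(X_G)/F^{p+1}H^{p+q}(X_G)$, and that the simplicity of the local system furnishes the identifications $E_2^{p,0}=H^p(B_G)$ and $E_2^{0,q}=H^q(X)$ (the latter using that $B_G$ is connected).

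First I would analyze the edge along the base. Since the spectral sequence lies in the first quadrant, every differential $d_r$ leaving $E_r^{p,0}$ lands in a group of negative fibre degree and hence vanishes; therefore the terms $E_r^{p,0}$ are successive quotients and one obtains a surjection $E_2^{p,0}\twoheadrightarrow E_\infty^{p,0}$. As $F^{p+1}H^p(X_G)=0$, we have $E_\infty^{p,0}=F^pH^p(X_G)\subseteq H^p(X_G)$, so the base edge homomorphism is the composite $H^p(B_G)=E_2^{p,0}\twoheadrightarrow E_\infty^{p,0}\hookrightarrow H^p(X_G)$. Dually, every differential entering $E_r^{0,q}$ originates in negative base degree and vanishes, so the terms $E_r^{0,q}$ are successive subobjects and $E_\infty^{0,q}\hookrightarrow E_2^{0,q}$; since $E_\infty^{0,q}=H^q(X_G)/F^1H^q(X_G)$ is a quotient of $H^q(X_G)$, the fibre edge homomorphism is the composite $H^q(X_G)\twoheadrightarrow E_\infty^{0,q}\hookrightarrow E_2^{0,q}=H^q(X)$.

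The essential point, and the step I expect to be the main obstacle, is to check that these two composites coincide with $\pi^*$ and $i^*$ respectively; this is where the construction of the spectral sequence must be invoked, rather than just its formal properties. I would establish it by naturality. Comparing the Borel fibration with the fibration $\{\ast\}\hookrightarrow B_G\xrightarrow{\,\mathrm{id}\,}B_G$, whose spectral sequence collapses with $E_2^{p,0}=H^p(B_G)$, via the morphism of fibrations covering $\pi$, one sees that the base edge map is induced by $\pi$ both on $E_2^{*,0}$ and on the abutment, and hence equals $\pi^*$. Similarly, comparing with the fibration $X\xrightarrow{\,\mathrm{id}\,}X\to\{\ast\}$, whose spectral sequence collapses with $E_2^{0,q}=H^q(X)$, via the morphism of fibrations determined by the fibre inclusion $i$, one sees that the fibre edge map is induced by $i$, and hence equals $i^*$. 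The verification reduces to tracking the filtrations and the induced maps on associated graded objects, which is routine once the naturality of the Leray--Serre spectral sequence under maps of fibrations is in place.
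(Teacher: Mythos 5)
The paper offers no proof of this proposition to compare against: it is quoted directly from McCleary \cite[Theorem 5.9]{Mc}, and the authors use it as a black box. So your proposal should be judged on its own, and it is correct; it is in fact the standard argument. The first-quadrant analysis correctly exhibits the two edge maps as the composites
$H^p(B_G)=E_2^{p,0}\twoheadrightarrow E_\infty^{p,0}=F^pH^p(X_G)\hookrightarrow H^p(X_G)$ and
$H^q(X_G)\twoheadrightarrow E_\infty^{0,q}\hookrightarrow E_2^{0,q}=H^q(X)$,
and identifying these with $\pi^*$ and $i^*$ by comparing with the collapsing spectral sequences of $\{\ast\}\hookrightarrow B_G\xrightarrow{\mathrm{id}}B_G$ and of $X\xrightarrow{\mathrm{id}}X\to\{\ast\}$ is precisely the naturality argument; as you note, the only genuine input beyond formal bookkeeping is the naturality of the Leray--Serre spectral sequence, including the natural identification of the $E_2$-terms.

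Two small corrections. First, the identification $E_2^{p,0}=H^p(B_G;H^0(X))\cong H^p(B_G)$ does not follow from simplicity of the coefficient system alone: it requires $H^0(X)\cong R$, i.e.\ that the fibre $X$ be connected (a hypothesis McCleary's Theorem 5.9 carries, and which is implicit in the paper's statement). The same connectedness is what makes the coefficient map $H^0(\{\ast\})\to H^0(X)$ in your first comparison an isomorphism, so it should be stated. Second, in that comparison the morphism of fibrations has total-space map $\pi$ and covers the \emph{identity} of $B_G$; your phrase ``covering $\pi$'' should be corrected accordingly, though the argument itself is unaffected.
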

	For details about the results related to the spectral sequence, we refer\cite{Mc}.

 Let $G$ be a compact Lie group which act freely on a finitistic space $X$ and $h:X_G\rightarrow X/G$ be the map induced by the $G$-equivariant projection $X\times E_G\rightarrow X$.	 Then $h$ induces an isomorphism on cohomology. Further, $X/G$ and $X_G$ have same homotopy type\cite{Dieck}.

In this paper, we have taken all actions as left actions, all spaces $X$ are assumed to be finitistic space  and $H^*(X;R)$ is notation for the  \v{C}ech cohomology with    coefficients in $R$, where $R=\mathbb{Z}_2$ or $\mathbb{Q}$. Note that $X\sim_R Y$ means $H^*(X;R)\cong H^*(Y;R)$.

The following results has also been used in this paper.
\begin{proposition}(\cite{Hemant2012,JKaur2015})\label{S1 and S3 pre}
	Let $X$ be a finitistic space with free  $G=\S^1$ or $\S^3$ action.  If $H^i(X;R)=0$ for all $i>n$ then $H^i(X/G;R)=0$ for all $i> n$.
\end{proposition}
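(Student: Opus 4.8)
The plan is to work with the orbit map rather than directly with the Borel construction, using that for a free action the orbit map $q\colon X\to X/G$ is a fibre bundle with fibre $G=\S^d$ ($d=1$ or $3$). First I would record the identification $H^*(X/G;R)\cong H^*(X_G;R)$ supplied by the homotopy equivalence $h\colon X_G\to X/G$ noted in the preliminaries, so that the edge map $\pi^*\colon H^*(B_G)\to H^*(X_G)\cong H^*(X/G)$ of Proposition \ref{5p2} is available. Since $G$ is connected, the local system on $X/G$ with fibre $H^*(\S^d;R)$ is simple (the structure group acts on the fibre through orientation-preserving maps), so the Leray--Serre spectral sequence of $q$ has $E_2^{p,r}=H^p(X/G;R)\otimes H^r(\S^d;R)$ and reduces to the Gysin sequence, the only nonzero rows being $r=0$ and $r=d$. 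The single nonzero differential is cup product with the Euler class $e\in H^{d+1}(X/G;R)$, which is the image $\pi^*(t)$ of the polynomial generator $t\in H^{d+1}(B_G;R)$.

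Next I would feed in the hypothesis $H^i(X;R)=0$ for $i>n$. Because rows beyond $r=d$ vanish, the top row has no incoming differentials, so
\[
E_\infty^{j,d}=\ker\!\big(\cup e\colon H^j(X/G;R)\longrightarrow H^{j+d+1}(X/G;R)\big),
\]
and this is a subquotient of $H^{j+d}(X;R)$. Hence $j+d>n$ forces it to vanish, giving that $\cup e$ is injective on $H^j(X/G;R)$ for all $j\ge n$. Iterating, $\cup e^{\,k}\colon H^j(X/G;R)\to H^{j+k(d+1)}(X/G;R)$ is injective for every $j\ge n$ and every $k\ge 0$: this is the exact structural consequence of the vanishing of $H^{>n}(X;R)$.

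The crux is then to show that $e$ acts \emph{locally nilpotently} on $H^*(X/G;R)$, for this together with the injectivity above forces $H^j(X/G;R)=0$ for all $j\ge n$ (given $\alpha$ of such degree, choose $N$ with $e^N\alpha=0$; injectivity of $\cup e^{\,N}$ gives $\alpha=0$), which is the assertion. This nilpotency is precisely where freeness and the finitistic hypothesis are indispensable; indeed, for the non-finitistic free $\S^1$-space $\S^\infty$ one has $\S^\infty/\S^1=\CP^\infty$, whose cohomology is unbounded, so the conclusion must fail without finitistic. I would supply nilpotency through Borel's localization theorem: since $X$ is finitistic and the action is free, $X^G=\varnothing$, so localizing $H^*_G(X;R)=H^*(X/G;R)$ at the multiplicative set generated by $t$ yields the localization of $H^*_G(X^G;R)=0$; hence every class is killed by a power of $e=\pi^*(t)$. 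For $G=\S^1$ this is immediate with $t\in H^2(B_{\S^1})$. For $G=\S^3$ I would restrict the (still free) action to a circle subgroup $\S^1\subset\S^3$, apply the $\S^1$-localization, and match the degree-$4$ generator of $H^*(B_{\S^3})$ with $t^2\in H^*(B_{\S^1})$ to transfer the nilpotency to $e\in H^4(X/G;R)$. The main obstacle is exactly the justification of this localization step for merely finitistic (not finite-dimensional) $X$, together with the bookkeeping needed to identify $e$ with the correct torus class in the non-abelian case $G=\S^3$; the remainder is the formal Gysin computation above, and it yields the stated vanishing for $i>n$ (in fact already for $i\ge n$).
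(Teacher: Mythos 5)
Your overall architecture is the right one, and it is in fact the skeleton of the standard argument: note first that the paper itself does not prove this proposition but quotes it from \cite{Hemant2012,JKaur2015}, and those papers argue exactly via the Gysin sequence of the $\S^d$-bundle $X\to X/G$ plus nilpotency of the characteristic class. Your Gysin half is correct and complete: since the fibre is connected the system is simple, $E_\infty^{j,d}=\ker(\cup\, e)$ is a subquotient of $H^{j+d}(X;R)$, so $\cup\, e$ is injective on $H^j(X/G;R)$ for $j>n-d$, and injectivity of all iterates plus nilpotency of $e$ kills $H^j(X/G;R)$ in that range (your remark that one gets vanishing already for $i\geq n$, indeed for $i>n-d$, is right). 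The problem is the step you yourself single out as the crux: obtaining the nilpotency of $e$ from ``Borel's localization theorem.''

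That step has a genuine gap for the coefficients the proposition actually allows. The statement includes $R=\Z_2$, and for $G=\S^1$ or $\S^3$ there is no off-the-shelf localization theorem mod $2$: Borel localization for torus actions is a characteristic-zero statement, and relative to $X^G$ it is simply \emph{false} with $\Z_2$-coefficients in the presence of finite isotropy --- e.g.\ for $\S^1$ acting on itself by $z\cdot w=z^2w$ one has $X^{\S^1}=\emptyset$ yet $H^*_{\S^1}(X;\Z_2)\cong H^*(\mathbb{RP}^{\infty};\Z_2)=\Z_2[s]$ with $t$ acting as multiplication by $s^2$, hence not nilpotently. Freeness is thus essential, but for a free action $H^*_G(X;R)=H^*(X/G;R)$ with $t$ acting as $\cup\, e$, so ``every class is killed by a power of $t$'' is \emph{literally} the nilpotency of $e$ you want: invoking localization here is circular unless you prove the finitistic, mod $2$, free-action version of it, which is the actual content of the proposition. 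The same applies to your $\S^3$ reduction: transferring nilpotency from $H^*(X/\S^1)$ back to $e\in H^4(X/\S^3;R)$ needs injectivity of the pullback along the $\S^2$-bundle $X/\S^1\to X/\S^3$ (true, because this bundle's mod $2$ Euler class is pulled back from $H^3(B_{\S^3};\Z_2)=0$, so its Gysin sequence splits), but you flag this as an obstacle rather than close it. The clean repair, and the route of the cited papers, avoids localization entirely: $X/G$ is again finitistic (orbit spaces of finitistic spaces under compact Lie group actions are finitistic, by a theorem of Deo et al.), and in a finitistic space \emph{every} \v{C}ech cohomology class of positive degree is nilpotent, since coverings with finite-dimensional nerves are cofinal and a class coming from a $D$-dimensional nerve has all cup powers above degree $D$ equal to zero. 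Applied to $e\in H^{d+1}(X/G;R)$ this gives the required nilpotency uniformly in $G$ and $R$, and your Gysin argument then finishes the proof.
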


\begin{proposition}(\cite{Hatcher})
	Let $R$ denotes a ring and $\S^{n-1}\to E\stackrel{p}{\rightarrow} B$ be an oriented sphere bundle. Then the following sequence is exact  with  coefficients in $R$
	\begin{align*}
	\cdots\rightarrow H^{i}(E)\stackrel{\rho}{\rightarrow}H^{i-n+1}(B)\stackrel{\cup}{\rightarrow}H^{i+1}(B) \stackrel{p^*}{\rightarrow}H^{i+1}(E)\stackrel{\rho}{\rightarrow}H^{i-n+2}(B)\rightarrow\cdots
	\end{align*}
	which  start with 
	\begin{align*}
	0\rightarrow &H^{n-1}(B)\stackrel{p*}{\rightarrow}H^{n-1}(E)\stackrel{\rho}{\rightarrow}H^0(B)\stackrel{\cup}{\rightarrow}H^n(B)\stackrel{p^*}{\rightarrow}H^n(E)\rightarrow\cdots 	\end{align*}
	where $\cup:H^i(B)\to H^{i+n}(B)$  maps $x\to x\cup u$ and $u\in H^n(B)$ denotes the characteristic class of the  sphere  bundle. This sequence is called the Gysin sequence. Observe that $p^*:H^i(E)\to H^i(B)$ is an isomorphism for all $0\leq i< n-1$.
\end{proposition}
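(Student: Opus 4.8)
The plan is to obtain the Gysin sequence from the Leray--Serre spectral sequence of the oriented sphere bundle $\S^{n-1}\hookrightarrow E\stackrel{p}{\to}B$, exploiting the fact that the fibre has reduced cohomology concentrated in a single top degree. First I would use orientability: it guarantees that $\pi_1(B)$ acts trivially on $H^*(\S^{n-1};R)$, so the local system is simple and $E_2^{k,l}=H^k(B)\otimes H^l(\S^{n-1};R)$, exactly as recorded in the preliminaries. Since $H^l(\S^{n-1};R)$ equals $R$ for $l=0$ and $l=n-1$ and vanishes otherwise, the $E_2$-page is supported on precisely the two rows $l=0$ and $l=n-1$, each row being isomorphic to $H^*(B)$ (the top row via tensoring with the fundamental class of the fibre).

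With only two nonzero rows a vertical distance $n-1$ apart, every differential $d_2,\dots,d_{n-1}$ vanishes for bidegree reasons, so $E_2=E_n$, the sole possibly nonzero differential is $d_n\colon E_n^{k,n-1}\to E_n^{k+n,0}$, and $E_{n+1}=E_\infty$. Using the module structure of the spectral sequence over $H^*(B)$, I would identify this transgressive $d_n$ with cup product by $u:=d_n(\text{fibre class})\in H^n(B)$, the characteristic (Euler) class of the bundle; hence $E_\infty^{m,0}=\operatorname{coker}\!\left(\cup u\colon H^{m-n}(B)\to H^m(B)\right)$ and $E_\infty^{m-n+1,n-1}=\ker\!\left(\cup u\colon H^{m-n+1}(B)\to H^{m+1}(B)\right)$. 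Convergence then supplies, for each $m$, the short exact sequence $0\to E_\infty^{m,0}\to H^m(E)\to E_\infty^{m-n+1,n-1}\to 0$, in which the injection is the bottom-row edge homomorphism $p^*$ of Proposition~\ref{5p2} and the surjection is the top-row edge homomorphism $\rho$, i.e.\ integration along the fibre. Splicing these short exact sequences through the maps $\cup u$ yields precisely the asserted long exact sequence. Reading off the low-degree corner, where $m<n$ forces the source of $d_n$ to vanish, gives $E_\infty^{m,0}=H^m(B)$ and $E_\infty^{m-n+1,n-1}=0$ for $m<n-1$, so $p^*$ is an isomorphism there; at $m=n-1$ one recovers the initial segment beginning $0\to H^{n-1}(B)\stackrel{p^*}{\to}H^{n-1}(E)\stackrel{\rho}{\to}H^0(B)\stackrel{\cup}{\to}H^n(B)\to\cdots$.

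The main obstacle, and really the only step beyond routine index bookkeeping, is the identification of the differential $d_n$ with cup product by the Euler class $u$, together with the naturality needed to recognize the two edge homomorphisms as $p^*$ and fibre integration $\rho$; once these are in hand, the two-row shape of $E_\infty$ forces the long exact sequence. As an alternative I could pass to the associated disk bundle $(D,E)$, where the zero section gives $H^i(D)\cong H^i(B)$, invoke the Thom isomorphism $H^i(B)\cong H^{i+n}(D,E)$, and substitute into the long exact sequence of the pair $(D,E)$; the connecting map then becomes $\cup u$ and the restriction becomes $\rho$. This is essentially the route of \cite{Hatcher}, but the spectral sequence argument is self-contained given the Leray--Serre machinery already set up above.
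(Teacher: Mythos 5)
The paper offers no internal proof of this proposition: it is a preliminary quoted directly from \cite{Hatcher}, so the only comparison available is with the cited source. Your spectral-sequence derivation is correct and complete in outline: orientability makes the coefficient system simple, the two-row $E_2$-page forces $E_2=E_n$ and $E_{n+1}=E_\infty$, the derivation property identifies $d_n$ on the top row with cup product by $u=d_n(\sigma)$ (legitimate because $d_n$ vanishes on the bottom row for bidegree reasons, so $d_n(x\otimes\sigma)=\pm(x\cup u)\otimes 1$), and splicing the kernel--cokernel short exact sequences coming from convergence, with the two edge homomorphisms recognized via Proposition \ref{5p2}, yields the long exact sequence and the low-degree isomorphisms $p^*\colon H^i(B)\to H^i(E)$ for $i<n-1$ (note, incidentally, that the proposition as printed writes this map in the wrong direction). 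However, this is a genuinely different route from the cited source: Hatcher avoids spectral sequences altogether, passing to the associated disk bundle $D$ (the mapping cylinder of $p$), proving the Thom isomorphism $H^i(B)\cong H^{i+n}(D,E)$ via Leray--Hirsch, and rewriting the long exact sequence of the pair $(D,E)$ -- exactly the alternative you sketch at the end. The trade-off is that Hatcher's argument is more elementary and exhibits $u$ concretely as the restriction of the Thom class, while your main route requires the multiplicative structure of the Leray--Serre spectral sequence but fits the present paper better, since the authors set up precisely this machinery in the preliminaries and run the same two-row/transgression style of argument in their own orbit-space computations in Section 4.
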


\section{Index and co-index of $\mathbb{S}^1$ and $\mathbb{S}^3$-spaces}
Recall that for $G=\mathbb{S}^d, d=1$ or 3, the universal principal $\mathbb{S}^d$-bundle is $\mathbb{S}^d\hookrightarrow \mathbb{S}^{\infty} \to \mathbb{FP}^{\infty}$, where $\mathbb{F}=\C$ or $\mathbb{H}$. Let $G=\mathbb{S}^d, d=1$ or 3 acts freely on $\mathbb{S}^{(d+1)k+d}$ by the standard action namely the component wise multiplication. Note that for a  topological group $G$ and $G$-space $X$  there exist a $G$-equivariant  map $f:G\to X$ defined as $f(g)=g.x_0$ where $x_0$ is any fixed element of $X$ and $G$  acts on itself by the group multiplication. In particular, for a $G$-space $X$ there exist  an $G$-equivariant map $f:G\to X$.

Now, we define index  and co-index of a free $G$-space $X$. We denote it by $\text{ind}_G X$ and $\text{co-ind}_G X$, respectively.

\begin{definition}
	Let $X$ be a free $G$-space, where $G=\mathbb{S}^d$, $d=1$ or $3$.  We define index  of $X$ as
	\begin{align*}
	\text{ind}_G X=\max \{k| \text{ there exists an $G$-equivariant map } f:\mathbb{S}^{(d+1)k+d}\to X,\ k\geq 0\}.
	\end{align*}
It is easy to see that $\text{ind}_G X\geq 0$. If there is no upper bound for  equivariant maps from spheres to $X$ then $\text{ind}_G X=+\infty$.   
\end{definition}

\begin{definition}
			Let $X$ be a free $G$-space, where$G=\mathbb{S}^d$, $d=1$ or $3$. We define co-index of $X$ as
	\begin{align*}
	\text{co-ind}_G X=\min \{k| \text{ there exist  an $G$-equivariant map } f:X\to \mathbb{S}^{(d+1)k+d},\ k\geq 0\}.
	\end{align*}
	If no such $k$ exist then $\text{co-ind}_G X=+\infty$.   
\end{definition}

We define $\text{ind}_G X=-1$ or $\text{co-ind}_G X=-1$ if and only if $X=\emptyset$ where $G=\mathbb{S}^1$ or $\mathbb{S}^3$.

Now, we recall\cite{JKaur2015,Hemant2012} the mod 2 cohomology index of a paracompact Hausdorff free $G$-space, where  $G=\mathbb{S}^1$  or $\mathbb{S}^3$. It is similar to the homology index for free involution on a paracompact Hausdorff space defined by  Conner and Floyd\cite{ConnerFloyd1960}.

\begin{definition}
			Let $X$ be a free $G$-space, where $G=\mathbb{S}^d$, $d=1$ or $3$ and $\omega\in H^{d+1}(X/G)$ be the Stiefel-Witney class of the associated  $G$-bundle $X\rightarrow X/G$. Then the mod 2 cohomology $G$-index of $X$ is defined as  the largest integer $k$ such that $\omega^k\neq 0.$ It is deonoted by cohom-index$_{G} X$.   
\end{definition}

Next, we observe that the  index and co-index of    $\mathbb{S}^{(d+1)n+d}$ for the action of componentwise multiplication of  $\S^d$   is $n$, where $d=1$ or 3.

Let $\mathbb{S}^{(d+1)n+d}$ be a free $\mathbb{S}^d$-space, $d=1$ or 3, with the standard action. For all $k\leq n$, the inclusion map $i:\mathbb{S}^{(d+1)k+d}\to \mathbb{S}^{(d+1)n+d}$ is $\mathbb{S}^d$-equivariant, therefore, $\text{ind}_{\mathbb{S}^d}\mathbb{S}^{(d+1)n+d}\geq k$.  By the  Borsuk-Ulam theorem, there does not exist an $\mathbb{S}^d$-equivariant map $f:\S^{(d+1)k+d}\to \S^{(d+1)n+d}$ for $k>n$. This implies that  $\text{ind}_{\mathbb{S}^d}\mathbb{S}^{(d+1)n+d}=n.$ By the similar argument, $\text{co-ind}_{\mathbb{S}^d}\mathbb{S}^{(d+1)n+d}=n.$

We know that if $X$ is a finitistic free  $G$-space, where $G=\S^d, d=1$ or $3$, with mod 2 cohomology ring of $\S^{(d+1)n+d}$ then the orbit space $X/G\sim_2 \mathbb{FP}^n$, where $\mathbb{F}=\C$ or  $\mathbb{H}$. This gives that the mod 2 cohomology index of $X$ which admits free $G$ actions is $n$(\cite{JKaur2015}).

Next, we observe  some properties of the index and co-index of a $G$-space $X$  for $G=\mathbb{S}^1$ or $\mathbb{S}^3$:
\begin{theorem}
		Let $X$ and $Y$ be  free $G$-spaces, where $G=\mathbb{S}^d$, $d=1$ or $3$.
	\begin{enumerate}
	\item If  $f:X\to Y$ is $G$-equivariant map then $\text{ind}_{G}X\leq \text{ind}_{G}Y.$
	
	\item If  $f:X\to Y$ is $G$-equivariant map then $\text{co-ind}_{G}X\leq \text{co-ind}_{G}Y.$
		
		\item  $\text{ind}_{G}X\leq \text{co-ind}_{G}X.$

		\item $\text{co-ind}_{G}(X*Y)\leq \text{co-ind}_{G}X+\text{co-ind}_{G}Y+1.$ 
	\end{enumerate}
\begin{proof}
	(1) and (2) follows from the definitions.\\
	\noindent (3) Let $f:\mathbb{S}^{(d+1)k+d} \to X$ be any $\mathbb{S}^d$-equivariant map where $d=1 $ or $3$. Then, we have
	$
	k=\text{co-ind}_{\mathbb{S}^d}\mathbb{S}^{(d+1)k+d}\leq \text{co-ind}_{\mathbb{S}^d }X.$ Consequently, $\text{ind}_{\mathbb{S}^d}X\leq \text{co-ind}_{\mathbb{S}^d}X.$
	
	\noindent (4) If either $\text{co-ind}_{\mathbb{S}^d}X$ or $\text{co-ind}_{\mathbb{S}^d}Y$ are $+\infty$ then the result is trivially true. So, let $\text{co-ind}_{\mathbb{S}^d}X=m$ and $\text{co-ind}_{\mathbb{S}^d}Y=n$. Let $f:X\to \mathbb{S}^{(d+1)m+d}$ and $g:Y\to \mathbb{S}^{(d+1)n+d}$ be the corresponding $\mathbb{S}^d$-equivariant maps. Then the map $f*g:X*Y\to \mathbb{S}^{(d+1)m+d}*\mathbb{S}^{(d+1)n+d}\approx \mathbb{S}^{(d+1)(n+m+1)+d}$ defined by $f*g([x,y,t])=[f(x),g(y),t]$ is a  $\mathbb{S}^d$-equivariant map. Hence, we get
$	
\text{co-ind}_{\mathbb{S}^d}X*Y\leq\text{co-ind}_{\mathbb{S}^d}{\mathbb{S}^{(d+1)(n+m+1)+d}}
=\text{co-ind}_{\mathbb{S}^d}X+\text{co-ind}_{\mathbb{S}^d}Y+1. 
$\qedhere
\end{proof}			\end{theorem}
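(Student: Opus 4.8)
The plan is to deduce all four parts from a single structural fact—that composites of $G$-equivariant maps are again $G$-equivariant—together with the Borsuk-Ulam theorem for $\mathbb{S}^d$-actions and the values $\text{ind}_{\mathbb{S}^d}\mathbb{S}^{(d+1)n+d} = \text{co-ind}_{\mathbb{S}^d}\mathbb{S}^{(d+1)n+d} = n$ established above. For parts (1) and (2) I would argue straight from the definitions. Given a $G$-equivariant $f : X \to Y$: if $\phi : \mathbb{S}^{(d+1)k+d} \to X$ is $G$-equivariant then so is $f \circ \phi : \mathbb{S}^{(d+1)k+d} \to Y$, so every $k$ admissible for $\text{ind}_G X$ is admissible for $\text{ind}_G Y$, and taking maxima gives (1). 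Dually, precomposing an equivariant map $Y \to \mathbb{S}^{(d+1)k+d}$ with $f$ produces an equivariant map $X \to \mathbb{S}^{(d+1)k+d}$, so each $k$ admissible for $\text{co-ind}_G Y$ is admissible for $\text{co-ind}_G X$, and taking minima gives (2).

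For part (3) I would set $m = \text{co-ind}_G X$ and fix an equivariant map $h : X \to \mathbb{S}^{(d+1)m+d}$. For any $k$ admitting an equivariant $\phi : \mathbb{S}^{(d+1)k+d} \to X$, the composite $h \circ \phi : \mathbb{S}^{(d+1)k+d} \to \mathbb{S}^{(d+1)m+d}$ is an equivariant map between standard $G$-spheres, so the Borsuk-Ulam theorem forces $k \leq m$. Taking the maximum over all such $k$ yields $\text{ind}_G X \leq \text{co-ind}_G X$.

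For part (4) I would use the join construction. The case where either co-index is infinite is trivial, so set $m = \text{co-ind}_G X$, $n = \text{co-ind}_G Y$, choose equivariant maps $f : X \to \mathbb{S}^{(d+1)m+d}$ and $g : Y \to \mathbb{S}^{(d+1)n+d}$, and form the join $f * g : X * Y \to \mathbb{S}^{(d+1)m+d} * \mathbb{S}^{(d+1)n+d}$. The crux—and the step I expect to require the most care—is to identify the target equivariantly rather than merely up to dimension: realizing the two spheres as unit spheres of $\mathbb{F}^{m+1}$ and $\mathbb{F}^{n+1}$ with $\mathbb{F} = \mathbb{C}$ or $\mathbb{H}$, their join is the unit sphere of $\mathbb{F}^{m+1} \oplus \mathbb{F}^{n+1} \cong \mathbb{F}^{m+n+2}$, namely $\mathbb{S}^{(d+1)(m+n+1)+d}$, and one must check that under this identification the diagonal $G$-action on the join coincides with the standard scalar action on $\mathbb{F}^{m+n+2}$. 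Granting this, $f * g$ maps equivariantly into a standard $G$-sphere, and combining part (2) with $\text{co-ind}_{\mathbb{S}^d}\mathbb{S}^{(d+1)(m+n+1)+d} = m+n+1$ gives the bound $\text{co-ind}_G(X * Y) \leq m+n+1 = \text{co-ind}_G X + \text{co-ind}_G Y + 1$.
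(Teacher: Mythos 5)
Your proposal is correct and follows essentially the same route as the paper: parts (1) and (2) from composition of equivariant maps, part (3) by reducing to an equivariant map between standard spheres (you inline the Borsuk--Ulam step that the paper packages as $\text{co-ind}_{\mathbb{S}^d}\mathbb{S}^{(d+1)k+d}=k$ together with part (2)), and part (4) via the join construction $f*g$ into $\mathbb{S}^{(d+1)(m+n+1)+d}$. Your extra remark that the join of the two standard spheres must be identified with the larger sphere \emph{equivariantly}, not just up to homeomorphism, is a point the paper passes over silently, but it is a refinement of the same argument rather than a different one.
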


	Let  $G=\mathbb{S}^d$, $d=1$ or $3$  acts freely on $\mathbb{F}$ under the action of scalar multiplication, where $\mathbb{F}=\C$ or $\mathbb{H}$.  Then, $.:\mathbb{F}\times G \to \mathbb{F}$ defined by $(w,c)\rightarrow c^{-1}.w$
is a free right $G$ action on $\mathbb{F}$. Let $\lambda=(\frac{X\times\mathbb{F}}{G},q,X/G,\mathbb{F})$ be the associated line bundle of the principal $G$-bundle  $G\hookrightarrow X \stackrel{p}{\rightarrow} X/G$, where $q:\frac{X\times\mathbb{F}}{G}\to X/G$ is defined as $q([x,z])=p(x)=[x]$.  As $E_G$ is free $G$-space, the map $h:X_G\rightarrow X/G$ defined by  $h([x,e])=[x]$ is a fibre bundle associated to the principal $G$-bundle $G\hookrightarrow X\to X/G$. As $E_G$ is contractible, $h$ is a homotopy equivalence. 
Define $E_1(\gamma)=\{([v],tv)\in \mathbb{FP}^{\infty}\times \mathbb{F}^{\infty} \mid t\in \mathbb{F}\}$ and $r:E_1(\gamma)\to \mathbb{FP}^{\infty}$ is the restriction of projection map onto the first component. Then, $\xi=(E_1(\gamma), r, \mathbb{FP}^{\infty}, \mathbb{F})$ is a universal vector bundle  of rank 1.

Let $E$ denotes the total space of the pull back bundle $h^*(\lambda)$. We can define $\phi:E\rightarrow E_1(\gamma)$ by
$\phi([x,v],[x',c'])=([v],c'.\theta(x,x').v)$
where $\theta$ is the translation map of the principal $G$-bundle $p: X\to X/G$. For the Borel fibration $X\stackrel{i}{\hookrightarrow}X_G\stackrel{\pi}{\rightarrow}\mathbb{F}P^\infty$, it is easy to see that $\pi^*(\xi)\cong h^*(\lambda)$. So, we get the following lemma:    

\begin{lemma}\label{classifying map}
		Let $X$ be a free $G$-space, where $G=\mathbb{S}^d$, $d=1$ or $3$. Then the map $\pi\circ h': X/G\rightarrow \mathbb{FP}^{\infty }$ is a classifying map for the bundle $\lambda$ where $h':X/G \rightarrow X_G$ denotes a homotopy inverse of $h$. 
\end{lemma}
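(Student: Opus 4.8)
The plan is to unwind the definition of a classifying map and reduce everything to the isomorphism $\pi^*(\xi)\cong h^*(\lambda)$ recorded immediately before the statement. Since $\xi$ is the universal rank-$1$ $\mathbb{F}$-bundle over $\mathbb{FP}^{\infty}$, a map $X/G\to\mathbb{FP}^{\infty}$ classifies $\lambda$ precisely when it pulls $\xi$ back to $\lambda$. Hence it suffices to establish $(\pi\circ h')^*(\xi)\cong\lambda$.

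First I would apply functoriality of the pullback to the composite $\pi\circ h'\colon X/G\xrightarrow{h'}X_G\xrightarrow{\pi}\mathbb{FP}^{\infty}$, obtaining $(\pi\circ h')^*(\xi)\cong (h')^*\bigl(\pi^*(\xi)\bigr)$. Substituting the given isomorphism $\pi^*(\xi)\cong h^*(\lambda)$ (realized by the explicit bundle map $\phi$ described above) and using functoriality once more yields $(h')^*\bigl(\pi^*(\xi)\bigr)\cong (h')^*\bigl(h^*(\lambda)\bigr)\cong (h\circ h')^*(\lambda)$.

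Next I would invoke the hypothesis that $h'$ is a homotopy inverse of the homotopy equivalence $h$, so that the composite $h\circ h'\colon X/G\to X/G$ is homotopic to $\mathrm{id}_{X/G}$. By homotopy invariance of pullback bundles over a paracompact base, this gives $(h\circ h')^*(\lambda)\cong \mathrm{id}^*(\lambda)=\lambda$; the base $X/G$ is paracompact because $X$ is finitistic, so this step is legitimate. Chaining the isomorphisms produces $(\pi\circ h')^*(\xi)\cong\lambda$, which is exactly the assertion that $\pi\circ h'$ is a classifying map for $\lambda$.

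The only substantive input is the already-granted isomorphism $\pi^*(\xi)\cong h^*(\lambda)$, whose construction via $\phi$ and the relation $\pi^*(\xi)\cong h^*(\lambda)$ does the real geometric work; the remainder is the standard formalism of classifying spaces. The single point meriting care is the applicability of homotopy invariance of the pullback, which rests on paracompactness of $X/G$ rather than on any special feature of the $G$-action.
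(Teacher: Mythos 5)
Your proof is correct and takes essentially the same approach as the paper: the paper's entire argument consists of constructing the bundle map $\phi$ to obtain $\pi^*(\xi)\cong h^*(\lambda)$ and then concluding the lemma by exactly the formal chain you spell out (functoriality of pullbacks together with homotopy invariance of pullbacks over the paracompact base $X/G$, using $h\circ h'\simeq \mathrm{id}_{X/G}$). You have simply made explicit the steps the paper leaves implicit after establishing that isomorphism.
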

For each $k\geq 1$, $\mathbb{FP}^k\subseteq \mathbb{FP}^{\infty}$. Put $\xi'=\xi|_{\mathbb{FP}^k}$, the restriction bundle of the universal bundle $\xi$ at $\mathbb{CP}^k$.
Note that $\xi'=\left( r^{-1}(\mathbb{FP}^k), r, \mathbb{FP}^k,\mathbb{F}\right) $ is a tautological bundle of rank 1, where $r^{-1}(\mathbb{FP}^k)=\left\lbrace  ([z],tz)\in \mathbb{FP}^{k}\times \mathbb{F}^{k+1}\mid  t\in \mathbb{F}  \right\rbrace.$ It is easy to prove that 
\begin{lemma}\label{restriction of bundle}
		Let $X$ be a free $G$-space, where $G=\mathbb{S}^d$, $d=1$ or $3$, and $f:\mathbb{S}^{(d+1)k+d}\rightarrow X$ be $G$-equivariant map, where $d=1$ or 3. Then $\overline{f}^*(\lambda)\cong \xi'$ where $\overline{f}:\mathbb{FP}^k\to X/G$ is a continuous map induced by $f$.
	\end{lemma}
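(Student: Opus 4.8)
The plan is to reduce everything to the standard sphere and then transport the answer along $f$. Write $X_0=\S^{(d+1)k+d}$ equipped with the standard (componentwise) $G$-action, so that $X_0/G=\mathbb{FP}^k$, and let $\lambda_0=\tfrac{X_0\times\mathbb{F}}{G}$ be the line bundle associated to the principal bundle $X_0\to\mathbb{FP}^k$, formed exactly as $\lambda$ was formed for $X$. The content of the lemma then splits into two independent facts: that $\lambda_0\cong\xi'$, and that $\overline{f}^{*}(\lambda)\cong\lambda_0$. The first is the standard identification of the line bundle associated to the Hopf bundle $X_0\to\mathbb{FP}^k$ with the tautological bundle $r^{-1}(\mathbb{FP}^k)=\{([z],tz)\}$ over the same base.

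For the second fact I would use the equivariance of $f$ directly. The map on orbit spaces is $\overline{f}([s])=[f(s)]$, and it is covered by
\[
F:\lambda_0\longrightarrow\lambda,\qquad F([s,w])=[f(s),w].
\]
This is well defined because $f$ is $G$-equivariant: for $g\in G$ one computes $[f(gs),gw]=[g\,f(s),gw]=[f(s),w]$ in $\lambda$, so $F$ respects the orbit relation. It covers $\overline{f}$, since $q(F([s,w]))=[f(s)]=\overline{f}([s])$, and on the fibre over $[s]$ it is the identity $w\mapsto w$ onto the fibre of $\lambda$ over $[f(s)]$, hence a fibrewise $\mathbb{F}$-linear isomorphism. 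A bundle map that is a fibrewise isomorphism and covers $\overline{f}$ is precisely an isomorphism onto the pullback, so $F$ gives $\lambda_0\cong\overline{f}^{*}(\lambda)$. Combining the two facts yields $\overline{f}^{*}(\lambda)\cong\lambda_0\cong\xi'$.

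I expect the only delicate point to be the identification $\lambda_0\cong\xi'$, where the conventions fixed in the paper must be matched with care: the standard action on $X_0$, the scalar action $w\mapsto c^{-1}w$ on $\mathbb{F}$, and the left-module description $\{tz\}$ of $\xi'$ interact, and for $d=3$ the noncommutativity of $\mathbb{H}$ forces one to track the order of quaternionic multiplication (and likely insert a conjugation) so that the comparison map is genuinely $\mathbb{F}$-linear for the intended module structures and lands on $\xi'$ rather than its conjugate or dual. Everything else is formal. As a check, the conclusion can also be read off from Lemma \ref{classifying map}: since $\pi\circ h'$ classifies $\lambda$, the composite $\pi\circ h'\circ\overline{f}$ classifies $\overline{f}^{*}(\lambda)$; the isomorphism $\lambda_0\cong\overline{f}^{*}(\lambda)$ just constructed shows this composite is homotopic to the inclusion $\mathbb{FP}^k\hookrightarrow\mathbb{FP}^{\infty}$, which is the classifying map of $\xi'$.
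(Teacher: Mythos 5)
Your proposal is correct, but there is nothing in the paper to compare it against: the authors state this lemma immediately after the phrase ``It is easy to prove that'' and give no argument whatsoever. Your two-step decomposition --- (a) the associated line bundle $\lambda_0$ of the Hopf bundle $\S^{(d+1)k+d}\to\mathbb{FP}^k$ is the tautological bundle $\xi'$, and (b) equivariance of $f$ makes $F([s,w])=[f(s),w]$ a well-defined fibrewise isomorphism covering $\overline{f}$, whence $\lambda_0\cong\overline{f}^*(\lambda)$ --- is the natural proof and is surely what the authors intended; step (b) is exactly the statement that an equivariant map of principal bundles exhibits the source as a pullback, transported through the associated-bundle construction.

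One refinement of the point you flagged: the conjugation issue is real, but it is not special to $d=3$. With the paper's conventions ($(x,w)\sim(gx,gw)$ in $\lambda_0$, and $\xi'$ consisting of pairs $([z],tz)$ with the left-multiple line), the only natural well-defined comparison map is $[s,w]\mapsto([s],\bar{w}s)$, and this is conjugate-linear in $w$ already for $\mathbb{F}=\C$. So as $\mathbb{F}$-bundles one gets $\lambda_0$ isomorphic to the conjugate of $\xi'$ (for $\C$ these are $\mathcal{O}(1)$ versus $\mathcal{O}(-1)$, genuinely non-isomorphic complex bundles), and $\lambda_0\cong\xi'$ holds only after forgetting to real vector bundles. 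This is exactly the category in which the paper uses the lemma: the Borsuk--Ulam theorem following it invokes $\xi'\cong\overline{f}^*(\lambda)$ ``as real vector bundles'' and only extracts mod $2$ Stiefel--Whitney classes, which are blind to conjugation. Read with that understanding, your proof is complete; the same caveat means your final consistency check via Lemma \ref{classifying map} identifies $\pi\circ h'\circ\overline{f}$ with the conjugate of the inclusion $\mathbb{FP}^k\hookrightarrow\mathbb{FP}^\infty$, which again makes no difference mod $2$.
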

Now, we prove a Borsuk-Ulam type result for a finitistic  space  which admits  free actions of $G=\S^1$ or $\S^3$.
\begin{theorem}
	Let $X$ be a finitistic free $G$-space, where $G=\mathbb{S}^d$, $d=1$ or $3$. If the mod 2 cohomology index of $X$ is $n$, then there does not exist  $G$-equivariant	map $f:\mathbb{S}^{(d+1)k+d}\to X$ for all $k>n$, where $\mathbb{S}^{(d+1)k+d} $ equipped with action of componentwise multiplication of $\S^d$.
\begin{proof} We  prove for $G=\S^3$. Suppose, on the contrary, that there exists a
$G$-equivariant map $f:\mathbb{S}^{4k+3}\to X$ for $k>n$. Then $f$ induces a continuous map
$\overline{f}:\mathbb{HP}^k\to
X/G$  and let $p:X\rightarrow X/G$ be the principal $G$-bundle. Note that the Borel fibration $X\stackrel{i}{\hookrightarrow} X_G\stackrel{\pi}{\rightarrow} \mathbb{HP}^{\infty} $ is a fibre bundle with  structure group $G$. Let $h:X_{G}\to X/G$ be homotopy equivalence and $\xi=(E_1(\gamma ),\gamma,\mathbb{HP}^{\infty},\mathbb{H}) $ be universal quaternion vector bundle of rank 1, where 
$E_1(\gamma)=\{([v],tv)\in \mathbb{HP}^{\infty}\times \mathbb{H}^{\infty}\mid t\in \mathbb{H}  \} $ 
and $\gamma: E_1(\gamma)\to \mathbb{HP}^{\infty}, \gamma([v],tv)=[v]$ be the projection map. Treat it as  a real vector bundle of rank 4. Let $\xi'$ be the restriction bundle of $\xi$ at $\mathbb{HP}^k.$ By Lemma \ref{classifying map}, $\pi\circ h': X/G\to \mathbb{HP}^{\infty}$ is a classifying map for associated line bundle  $\lambda=\left( \frac{X\times \mathbb{H}}{G},q,X/G, \mathbb{H}\right) $  of principal $G$-bundle $p:X\to X/G$, where $h'$ denotes the homotopy inverse of $h$.  
We know that $H^*(\mathbb{HP}^{\infty})=\mathbb{Z}_2[t]$, where $\deg t=4$. So the  first nonzero  Steifel-Witney class of $\xi$ is $\omega_4(\xi)=t$. Put $\pi^*(t)=u$. By the naturality of Witney classes, we get 
$(h')^*(u)=( h')^*(\pi^*(t))=(\pi\circ h')^*(\omega_4(\xi))=\omega_4(\lambda).$
As $f$ is equivariant map, by Lemma \ref{restriction of bundle}, we get $\xi'\cong \overline{f}^*(\lambda)$ as real vector bundles. Again by the naturality of Witney classes, 
$$\overline{f}^*(\omega_4(\lambda))= \omega_4(\overline{f}^*(\lambda))= \omega_4(\xi')=t',\text{ ~~~~~~~~~~~~~~~~~~~~~~~~say. }$$
So, $(h')^*(u)=\omega_4(\lambda)$ and $\overline{f}^*(\omega_4(\lambda))=t'\neq 0$. So, we get  $t'=\overline{f}^*((h')^*(u))$ and hence $t'$  is the generator of $ H^4(\mathbb{HP}^k)$. Note that $(h')^*(u)$ is the characteristic class for the principal bundle $X\to X/G$. By our hypothesis, the mod 2 cohomology index of $X$ is $n$ which gives that $((h')^*(u))^{n+1}=0$. So, we get  $0=\overline{f}^*(((h')^*(u))^{n+1})= t'^{n+1}$ which contradicts that $t'$ is the  generator of $H^4(\mathbb{HP}^{k})$.

Similarly, we can prove for $G=\S^1$.
\end{proof}
\end{theorem}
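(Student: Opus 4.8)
The plan is to argue by contradiction, showing that the existence of a $G$-equivariant map $f:\mathbb{S}^{(d+1)k+d}\to X$ with $k>n$ would force a forbidden power of the index class to survive. I would carry out the $d=3$ case in detail, the $d=1$ case being identical with $\mathbb{HP}$ replaced by $\mathbb{CP}$ and the degree $4$ replaced by $2$ throughout. The entire argument hinges on transporting the single Stiefel-Whitney class that detects the index through two successive pullbacks.

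First I would pin down the index class intrinsically. Since $H^*(\mathbb{HP}^\infty;\mathbb{Z}_2)=\mathbb{Z}_2[t]$ with $\deg t=4$, the universal bundle $\xi$, regarded as a real rank-$4$ bundle, has first nonzero Stiefel-Whitney class $\omega_4(\xi)=t$. Writing $u=\pi^*(t)$ and invoking Lemma \ref{classifying map}, the map $\pi\circ h'$ classifies $\lambda$, so naturality of Stiefel-Whitney classes gives $(h')^*(u)=(\pi\circ h')^*(\omega_4(\xi))=\omega_4(\lambda)$; by definition this degree-$4$ class is exactly the index class $\omega\in H^4(X/G)$.

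Next I would push $\omega$ forward along the induced map $\overline{f}:\mathbb{HP}^k\to X/G$. Because $f$ is equivariant, Lemma \ref{restriction of bundle} identifies $\overline{f}^*(\lambda)$ with the tautological bundle $\xi'$ on $\mathbb{HP}^k$ as real rank-$4$ bundles, and a second application of naturality yields
\[
\overline{f}^*(\omega)=\overline{f}^*(\omega_4(\lambda))=\omega_4\bigl(\overline{f}^*(\lambda)\bigr)=\omega_4(\xi').
\]
Since $\omega_4$ is the first nonzero Stiefel-Whitney class of the tautological line bundle, this is the generator $t'$ of $H^4(\mathbb{HP}^k;\mathbb{Z}_2)$. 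The hypothesis that the mod $2$ cohomology index equals $n$ means $\omega^{n+1}=0$, whence $(t')^{n+1}=\overline{f}^*(\omega^{n+1})=0$; but $H^*(\mathbb{HP}^k;\mathbb{Z}_2)=\mathbb{Z}_2[t']/(t'^{k+1})$, so $(t')^{n+1}\neq 0$ as soon as $n+1\le k$, which holds because $k>n$. This contradiction completes the proof.

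The only real difficulty is bookkeeping: one must verify that the class controlling the index is precisely $\omega_4(\lambda)$ and that both Lemma \ref{classifying map} and Lemma \ref{restriction of bundle} are applied to the same degree-$4$ Stiefel-Whitney class, so that the two naturality squares compose to send $t\mapsto\omega\mapsto t'$. Once the generator of $H^4(\mathbb{HP}^k)$ is recognized as $\overline{f}^*(\omega)$, the nilpotency comparison is automatic.
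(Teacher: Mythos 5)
Your proposal is correct and follows essentially the same route as the paper's own proof: identify the index class as $\omega_4(\lambda)$ via the classifying map of Lemma \ref{classifying map}, pull it back to the generator $t'$ of $H^4(\mathbb{HP}^k;\mathbb{Z}_2)$ via Lemma \ref{restriction of bundle}, and derive a contradiction from $\omega^{n+1}=0$ versus $(t')^{n+1}\neq 0$. Your version is in fact slightly more careful than the paper's at the final step, where you note explicitly that the contradiction needs $n+1\le k$, i.e.\ $k>n$.
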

From the above theorem, it is easy to derive
\begin{corollary}\label{ind<coind}
	 	Let $X$ be a finitistic free $G$-space, where $G=\mathbb{S}^d$, $d=1$ or $3$. Then 
	$$\text{ind}_{G}X\leq \text{cohom-ind}_{G}X.$$
\end{corollary}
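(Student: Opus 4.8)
The plan is to read the inequality off directly from the preceding theorem by unwinding the two definitions involved. First I would set $n=\text{cohom-ind}_{G}X$, the mod 2 cohomology $G$-index of $X$, so that by definition $n$ is the largest integer with $\omega^{n}\neq 0$, where $\omega\in H^{d+1}(X/G)$ is the Stiefel--Whitney class of the associated $G$-bundle $X\to X/G$. If $n=+\infty$ there is nothing to prove, so I may assume $n$ is finite.

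Next I would invoke the theorem just established. Since the mod 2 cohomology index of $X$ equals $n$, there is no $G$-equivariant map $f:\mathbb{S}^{(d+1)k+d}\to X$ whenever $k>n$. Equivalently, every $k\geq 0$ for which a $G$-equivariant map $\mathbb{S}^{(d+1)k+d}\to X$ exists must satisfy $k\leq n$.

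Finally I would compare this with the definition of the index. Recall that $\text{ind}_{G}X=\max\{k\mid \text{there exists a $G$-equivariant map } \mathbb{S}^{(d+1)k+d}\to X\}$. The set over which this maximum is taken is nonempty, since the equivariant map $G\to X$, $g\mapsto g\cdot x_{0}$, already realizes the value $k=0$, and by the previous step this set is bounded above by $n$; hence the maximum is at most $n$. Therefore $\text{ind}_{G}X\leq n=\text{cohom-ind}_{G}X$, as claimed.

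There is no genuine obstacle here: all the content is carried by the theorem, and the corollary is merely a translation of its conclusion into the language of the index. The only points worth a moment's care are the degenerate cases, namely the empty space, where both quantities equal $-1$, and the possibility that the cohomology index is infinite; in each of these the inequality holds trivially.
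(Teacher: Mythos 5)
Your proposal is correct and matches the paper's intent exactly: the paper derives this corollary directly from the preceding Borsuk--Ulam type theorem (stating only that it ``is easy to derive''), and your argument---bounding the set of admissible $k$ by $n=\text{cohom-ind}_{G}X$ via that theorem and noting the set is nonempty because of the map $g\mapsto g\cdot x_{0}$---is precisely the intended unwinding of the definitions. Your attention to the degenerate cases ($X=\emptyset$ and infinite cohomology index) is a small bonus the paper leaves implicit.
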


\section{$\mathbb{S}^1$ and $\mathbb{S}^3$ actions on product of spheres}
In this section, we will compute  the cohomology structure  of the orbit spaces of free actions of $G=\mathbb{S}^1$ or $\S^3$  on a finitistic space $X\sim_R \S^n\times\S^m,1\leq n\leq m$, where $R=\mathbb{Q}$ or $\mathbb{Z}_2$. We also obtain an upper bound of the index of $X\sim_{\mathbb{Z}_2} \S^n\times\S^m$ for free $G$ actions and establish Borsuk-Ulam type results. By the Kunneth formula, we have
$H^*(X)=R[x,y]/\langle x^2,y^2 \rangle,\quad \deg x=n,\,\deg y=m.  
$

First, we discuss free actions of $G=\mathbb{S}^3$ on $X$ with the rational coefficients.

\begin{theorem}
	Let $G=\mathbb{S}^3$ acts freely on a finitistic space $X\sim_{\mathbb{Q}}\mathbb{S}^n\times \mathbb{S}^m,1\leq n\leq m$.  Then $H^*(X/G)$ is isomorphic to one of the following:
	\begin{enumerate}
		\item[(i)] $\mathbb{Q}[u,v]\langle u^{\frac{n+1}{4}},v^2 \rangle$,
		where $\deg u=4,\deg v=m,n\equiv3(\text{mod } 4)$

		\item[(ii)] $\mathbb{Q}[u,v]/\langle u^{\frac{m+n+1}{4}},u^{\frac{m-n+1}{4}}v-\alpha u^{\frac{m+1}{4}},v^2-\beta u^{\frac{n}{2}}-\gamma u^{\frac{n}{4}}v\rangle$,
		where $\deg u=4,\deg v=n,\alpha,\beta,\gamma\in \mathbb{Q}$, $m-n\equiv 3(\text{mod } 4)$, $n$ is even, $m$ is odd and $\alpha=0$ if $m<2n$ or $n\equiv 2(\text{mod } 4)$ and $\gamma=0$ if $n\not\equiv 0( \text{mod } 4)$
		
		\item[(iii)] $\mathbb{Q}[u,v]/\langle u^{\frac{m+1}{4}},v^2 -\alpha u^{\frac{n}{2}}-\beta u^{\frac{n}{4}}v \rangle$, where $\deg u=4,\deg v=n,m\equiv3(\text{mod } 4),\alpha,\beta\in\mathbb{Q},\beta=0$ if $n\not\equiv0(\text{mod } 4)$ and $\alpha=0$ if either $n\not\equiv2(\text{mod } 4)$ or $2n\geq m$.
	\end{enumerate}	
\end{theorem}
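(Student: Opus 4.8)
The plan is to run the Leray--Serre spectral sequence of the Borel fibration $X\stackrel{i}{\hookrightarrow}X_G\stackrel{\pi}{\to}\mathbb{HP}^\infty$ with rational coefficients. Since $\mathbb{HP}^\infty$ is simply connected the local system is simple, so by Proposition~\ref{5p2} the second page is the tensor product
\begin{equation*}
E_2^{*,*}=H^*(\mathbb{HP}^\infty;\mathbb{Q})\otimes H^*(X;\mathbb{Q})=\mathbb{Q}[t]\otimes\bigl(\mathbb{Q}[x,y]/\langle x^2,y^2\rangle\bigr),
\end{equation*}
with $t$ in bidegree $(4,0)$ and $x,y$ in bidegrees $(0,n)$ and $(0,m)$. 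Because the action is free we have $H^*(X_G)\cong H^*(X/G)$, and Proposition~\ref{S1 and S3 pre} gives $H^i(X/G)=0$ for $i>n+m$, so the abutment is finite dimensional. As $E_2$ contains the infinite polynomial tower $\{t^j\}_{j\ge0}$, the sequence cannot collapse; the entire computation amounts to (a) identifying the differentials that truncate this tower and (b) resolving the resulting multiplicative extensions.

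First I would locate the earliest possibly nonzero differentials on the fibre generators by a bidegree count. The transgression of $x$ lands in $E^{n+1,0}$ and that of $y$ in $E^{m+1,0}$; since the base row is concentrated in degrees divisible by $4$, these can be nonzero only when $n+1\equiv0$, respectively $m+1\equiv0\pmod4$. Combined with finiteness of the abutment, this forces exactly one of three patterns. If $n\equiv3\pmod4$ then $x$ transgresses, $d_{n+1}(x)=t^{(n+1)/4}$ up to a unit, and by the Leibniz rule $d_{n+1}(xy)=t^{(n+1)/4}y$; the tower is truncated and $y$ survives as a degree-$m$ generator with square zero, giving case (i). If $n\not\equiv3$ but $m\equiv3\pmod4$ then only $y$ transgresses, $d_{m+1}(y)=t^{(m+1)/4}$, while $x$ survives as a degree-$n$ generator and $d_{m+1}(xy)=x\,t^{(m+1)/4}$ truncates the fibre module; this is case (iii). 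The delicate case (ii) arises when neither generator reaches the base directly: the first nonzero differential is the intermediate $d_{m-n+1}(y)=c\,x\,t^{(m-n+1)/4}$, possible precisely when $m-n\equiv3\pmod4$, after which $xy$ survives to page $m+n+1$ and transgresses, $d_{m+n+1}(xy)=c'\,t^{(m+n+1)/4}$, finally truncating the tower. The parity hypotheses $n$ even and $m$ odd are exactly what exclude (i) and (iii) and force this two--step behaviour.

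With $E_\infty$ determined additively, I would read off the associated graded of $H^*(X/G)$, namely a free module on the surviving fibre generator $v$ over a truncated polynomial algebra $\mathbb{Q}[u]/(u^N)$ with $\deg u=4$, and then solve the extension problems to recover the ring. The essential point is that products vanishing on the fibre may acquire correction terms of strictly higher filtration in $H^*(X/G)$: this is the origin of the relation $v^2=\alpha u^{n/2}+\beta u^{n/4}v$ in case (iii), and of both $v^2=\beta u^{n/2}+\gamma u^{n/4}v$ and $u^{(m-n+1)/4}v=\alpha u^{(m+1)/4}$ in case (ii). Which monomials may appear is dictated by divisibility (the power $u^{n/2}$ requires $n$ even and $u^{n/4}v$ requires $4\mid n$, so at most one of the two correction terms is an honest integral power) and by the truncation bound $N$ (a term survives only if its $u$-exponent stays below $N$, which is the source of degree conditions such as $2n\ge m$). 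A suitable normalization of the generator $v$ then fixes the stated vanishing of the coefficients.

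I expect the main obstacle to lie in case (ii) and in the extension problems rather than in spotting the differentials. Conceptually, in the mixed case I must verify that the two--step pattern is forced by finiteness: that $x$ remains a permanent cycle up to page $m+n+1$, that $y$ genuinely dies so that no spurious third generator appears, and that no earlier differential on any intermediate page interferes with this bookkeeping. Technically, $E_\infty$ pins down $H^*(X/G)$ only as a graded vector space, so extracting the exact ring relations --- the precise powers of $u$ that occur and which of $\alpha,\beta,\gamma$ are forced to vanish --- requires a careful degree-by-degree analysis using the multiplicative structure of the spectral sequence together with the vanishing $H^i(X/G)=0$ for $i>n+m$. These extension computations, together with tracking the residues modulo $4$ across all admissible pairs $(n,m)$, should constitute the bulk of the argument.
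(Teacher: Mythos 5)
Your overall strategy coincides with the paper's: the rational Leray--Serre spectral sequence of the Borel fibration, a trichotomy of differential patterns ($d_{n+1}(1\otimes x)\neq 0$; $d_{m-n+1}(1\otimes y)\neq 0$ followed by $d_{m+n+1}(1\otimes xy)\neq 0$; $d_{m+1}(1\otimes y)\neq 0$), and a final resolution of multiplicative extensions producing the correction terms $\alpha,\beta,\gamma$. However, there are genuine gaps. First, your case analysis is keyed to congruences of $(n,m)$ rather than to the actual vanishing pattern of the differentials, and the logic runs in the wrong direction: $n\equiv 3\ (\mathrm{mod}\ 4)$ \emph{permits} $d_{n+1}(1\otimes x)\neq 0$ but does not force it (both patterns (i) and (iii) can occur when $n\equiv m\equiv 3\ (\mathrm{mod}\ 4)$, e.g.\ for $\mathbb{S}^3$ acting on $\mathbb{S}^3\times\mathbb{S}^7$ through either factor), and, more seriously, the parity conditions in case (ii) ($n$ even, $m$ odd) are conclusions to be proved, not hypotheses to invoke. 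The paper derives them from the Leibniz rule in characteristic zero: if $m$ were even, then $0=d_{m-n+1}(1\otimes y^2)=2c\,t^{(m-n+1)/4}\otimes xy\neq 0$, a contradiction. The same trick is needed inside your case (i) to rule out the non-transgressive differential $d_{n+1}(1\otimes y)=d\,t^{(n+1)/4}\otimes x$ when $m=2n$; without it you cannot assert that ``$y$ survives,'' and the additive structure you read off from $E_\infty$ could be wrong. You never mention this argument, yet it is the mechanism that makes the list of patterns exhaustive and pins down the stated congruence and parity restrictions.

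Second, you omit the diagonal case $n=m$, in which both fibre generators can transgress simultaneously: $d_{n+1}(1\otimes x)=c\,t^{(n+1)/4}\otimes 1$ and $d_{n+1}(1\otimes y)=d\,t^{(n+1)/4}\otimes 1$ with $c,d\neq 0$. Then neither $x$ nor $y$ survives; the paper shows that the combination $cy-dx$ is a permanent cocycle (indeed $d_{n+1}(1\otimes xy)=t^{(n+1)/4}\otimes(cy-dx)$ spans the image in the relevant bidegree), and that one still lands in presentation (i) after choosing $v\in H^n(X_G)$ with $i^*(v)=cy-dx$. Your claim that the three patterns are mutually exclusive, and that in pattern (i) the surviving generator is $y$ itself, breaks down precisely here. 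These gaps are repairable --- the machinery you set up is the right one --- but as written the proofs of exhaustiveness, of the parity restrictions in (ii), and of the $n=m$ case are missing.
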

\begin{proof}
	As  $\pi_1(B_G)$ acts trivially on $X$,
		$E_{2}^{k,l}\cong H^k(B_G)\otimes H^l(X)$. First, we assume  $n<m$.	
	The possible nontrivial differentials are $d_{n+1}, d_{m+1},d_{m-n+1}$ or $d_{m+n+1}$.   There are two possible cases: (i) $d_{r}(1\otimes x)\neq 0$, and  (ii) $d_{r}(1\otimes x)=0$ and $d_{r}(1\otimes y)\neq 0$.

	\textbf{Case (i):} When $d_{r}(1\otimes x)\neq 0$, then  $r$ must be $n+1$, $n\equiv3(\text{mod } 4) $ and $d_{n+1}(1\otimes x)=ct^{\frac{n+1}{4}}\otimes 1$ for some $0\neq c\in\mathbb{Q} $. If $m=2n$ and $d_{n+1}(1\otimes y)=dt^{\frac{n+1}{4}}\otimes x$, where $0\neq d\in\mathbb{Q} $, then
	$0=d_{n+1}(1\otimes y^2)=2dt^{\frac{n+1}{4}}\otimes xy$ which is not possible. Therefore, $d_{n+1}(1\otimes y)=0$. Consequently, $d_{n+1}(t^k\otimes x)=ct^{k+\frac{n+1}{4}}\otimes 1$, $d_{n+1}(t^k\otimes y)=0$ and $d_{n+1}(t^k\otimes xy)=ct^{k+\frac{n+1}{4}}\otimes y$   for all $k\geq 0$, and  $E_2^{*,*}=E_{n+1}^{*,*}$.  This implies that
	$d_{n+1}:E_{n+1}^{k,l}\to E_{n+1}^{k+n+1,l-n}$  is an isomorphism for all $k\geq 0$, and $l=n$ or $n+m$. So, we get $E_{n+2}^{k,l}=0$ for all $k\geq 0$, $l=n$ or $n+m$; and  $E_{n+2}^{k,l}=E_{2}^{k,l}$ if $k<n+1$, $l=0$ or $m$ and trivial otherwise. As $n<m$, $d_{m+1}$ is the trivial homomorphism and so  $E^{*,*}_{\infty}=E_{n+2}^{*,*}$. So, we have
		\begin{align*}
	H^j(X_G)=
	\begin{cases}
	\mathbb{Q}&\text{ if }  0\leq j\equiv0(\text{mod } 4)< n,m\leq j\equiv m(\text{mod }4)<m+n\\
	0&\text{ otherwise.}
	\end{cases} 
	\end{align*}   Note that $t\otimes 1\in E_{2}^{4,0}$ and $1\otimes y\in E_{2}^{0,m}$ are permanent cocyles, where $t$ generates $H^*(B_G)$. 
 Let $w\in E_{\infty}^{0,m}$ and  $u\in E_{\infty}^{4,0}$ be elements corresponding to $1\otimes y\in E_{2}^{0,m}$ and $t\otimes 1\in E_{2}^{4,0}$, respectively. Then $w^2=0$ and $ u^{\frac{n+1}{4}}=0$.  
	Therefore, there exist $v\in H^m(X_G)$ such that $i^*(v)=y$. So, we have $v^2=0$. Thus, 
	$$H^*(X_G)=\frac{\mathbb{Q}[u,v]}{\langle u^{\frac{n+1}{4}},v^2 \rangle}$$
	where $\deg u=4,\deg v=m.$ This realizes case(i).

	\noindent\textbf{Case(ii):} When $d_r(1\otimes x)=0$ and $d_r(1\otimes y)\neq 0$. Then, there  are two subcases:

	\textbf{Subcase(i):} When $r=m-n+1$.
	
	In this case, $m-n\equiv3(\text{mod } 4) $ and $d_{m-n+1}(1\otimes y)=ct^{\frac{m-n+1}{4}}\otimes x$ for some $0\neq c\in\mathbb{Q} $. If $m$ is even then  $0=d_{m-n+1}(1\otimes y^2)=2ct^{\frac{m-n+1}{4}}\otimes xy$ which is not possible. Therefore, $m$ must be odd and $n$ is even. 
	We get $d_{m-n+1}(t^k\otimes xy)=0$, $d_{m-n+1}(t^k\otimes y)=ct^{k+\frac{m-n+1}{4}}\otimes x$   for all $k\geq 0$, and   $E_2^{*,*}=E_{m-n+1}^{*,*}$.  This implies that
	$d_{m-n+1}:E_{m-n+1}^{k,m}\to E_{m-n+1}^{k+m-n+1,n}$  is an isomorphism for all $k\geq 0$. So, $
	E_{m-n+2}^{k,l}= E_{2}^{k,l}\text{ for all } k\geq 0,l=0,$ $n+m;  
	E_{m-n+2}^{k,n}= E_{2}^{k,n} $  if $k<m-n+1$ and trivial otherwise.
	As $G$ acts freely on $X$,  $d_{m+n+1}$ must be nontrivial. Let $d_{m+n+1}(1\otimes xy)=dt^{\frac{m+n+1}{4}}\otimes 1$, where $0\neq d\in \mathbb{Q}$. Then, $d_{m+n+1}(t^k\otimes xy)=dt^{k+\frac{m+n+1}{4}}\otimes 1$ for all $k\geq 0$, and so $d_{m+n+1}:E_{m+n+1}^{k,m+n}\to E_{m+n+1}^{k+m+n+1,0}$  is an isomorphism for all $k\geq 0$.  This implies that $ E_{m+n+2}^{k,n}=E_{m-n+2}^{k,n},
	E_{m+n+2}^{k,m+n}=0$ for all $k\geq 0$, and $E_{m+n+2}^{k,0}=E_{m-n+2}^{k,0}$ if $k<m+n+1$ and trivial otherwise.  Now, 	$E^{*,*}_{\infty}=E_{m+n+2}^{*,*}$.      If $n\equiv 0(\text{mod } 4)$ then $m\equiv3(\text{mod } 4) $, and we get  
	\begin{align*}
	H^j(X/G)=
	\begin{cases}
	\mathbb{Q}&\text{ if }0 \leq j\equiv0(\text{mod } 4)<n, m<j\equiv0(\text{mod } 4)<n+m\\
	\mathbb{Q}\oplus\mathbb{Q}&\text{ if }n\leq j\equiv0(\text{mod } 4)<m\\
	0&\text{ otherwise. } \end{cases} 
	\end{align*}
	If $n\equiv 2(\text{mod } 4)$ then $m\equiv1(\text{mod } 4) $, and we get 
	\begin{align*}
	H^j(X/G)=
	\begin{cases}
	\mathbb{Q}&\text{ if } 0 \leq j\equiv0(\text{mod } 4)<n+m,n\leq j\equiv2(\text{mod } 4)<m\\
	0&\text{ otherwise. }\end{cases} 
	\end{align*}  
	  
	 Let $w\in E_{\infty}^{0,n}$ and  $u\in E_{\infty}^{4,0}$ be elements corresponding to  permanent cocycles $1\otimes x\in E_{2}^{0,n}$ and $t\otimes 1\in E_{2}^{4,0}$, respectively. We have  $w^2=u^{\frac{m+n+1}{4}}=u^{\frac{m-n+1}{4}}w=0$. This implies that 
	$$\textnormal{Tot}E_{\infty}^{*,*}\cong \frac{\mathbb{Q}[u,w]}{\langle w^2,u^{\frac{m+n+1}{4}},u^{\frac{m-n+1}{4}}w \rangle}\text{, where} \deg u=4\text{ and } \deg w=n.$$ 
	There exist $v\in H^n(X_G)$ corresponding to $w\in E_{\infty}^{0,n}$ such that $i^*(v)=x$. 
	As $u^{\frac{m-n+1}{4}}w=0$ in $\textnormal{Tot}E_{\infty}^{*,*}$, we get  $u^{\frac{m-n+1}{4}}v=\alpha u^{\frac{m+1}{4}}$, $\alpha\in\mathbb{Q}$ and $\alpha=0$ when $n<2m$ or $n\equiv2(\text{mod } 4)$.  Also, if $2n\leq m+n-3$ then $u^{\frac{n}{2}}\in H^{2n}(X_G)$ and if $n$ is multiple of 4 then $u^{\frac{n}{4}}v\in H^{2n}(X_G)$. Therefore, $v^2=\alpha u^{\frac{n}{2}}+\beta u^{\frac{n}{4}}v$ where $\alpha,\beta\in \mathbb{Q}$. Thus, we have
	\begin{align*}
	H^*(X_G)=\frac{\mathbb{Q}[u,v]}{\langle u^{\frac{m+n+1}{4}},u^{\frac{m-n+1}{4}}v-\alpha u^{\frac{m+1}{4}},v^2-\beta u^{\frac{n}{2}}-\gamma u^{\frac{n}{4}}v\rangle}
	\end{align*}
	where $\deg u=4,\deg v=n,\alpha,\beta,\gamma\in \mathbb{Q}$, $m-n\equiv 3(\text{mod } 4)$, $n$ is even, $m$ is odd and $\alpha=0$ if $m<2n$ or $n\equiv2(\text{mod } 4)$ and $\gamma=0$ if $n\not\equiv0(\text{mod } 4)$.  This realizes case(ii) of the theorem.

	\textbf{Subcase (ii):} When $r=m+1$.

	In this case,  $m\equiv3(\text{mod } 4) $ and $d_{n+1}=d_{m-n+1}\equiv 0$. Let  $d_{m+1}(1\otimes y)=ct^{\frac{m+1}{4}}\otimes 1$ for some $0\neq c\in\mathbb{Q} $.  Then $d_{m+1}(1\otimes x)=0$ and $d_{m+1}(t^k\otimes xy)=(-1)^nct^{k+\frac{m+1}{4}}\otimes y$   for all $k\geq 0$ and  $E_2^{*,*}=E_{m+1}^{*,*}$.  This implies that
	$d_{m+1}:E_{m+1}^{k,l}\to E_{m+1}^{k+m+1,l-m}$  is an isomorphism for all $k\geq 0$, and $l=m$ or $m+n$. So, we get  $E_{m+2}^{k,l}=E_{2}^{k,l}$ if $k<m+1$ and $l=0$ or $n$ and trivial otherwise.  So, $E_{\infty}^{*,*}=E_{m+2}^{*,*}$.  If $n\equiv0(\text{mod } 4)$, then
	\begin{align*}
	H^j(X_G)=
	\begin{cases}
	\mathbb{Q}&\text{ if }  0\leq j\equiv0(\text{mod } 4)< n,m<j\equiv0(\text{mod } 4)<m+n\\
	\mathbb{Q}\oplus\mathbb{Q}&\text{ if }n\leq j\equiv0(\text{mod } 4)<m\\
	0&\text{ otherwise.}
	\end{cases} 
	\end{align*} 
	If $n\not\equiv0(\text{mod } 4)$, then
	\begin{align*}
	H^j(X_G)=
	\begin{cases}
	\mathbb{Q}&\text{ if }  0\leq j\equiv0(\text{mod } 4)< m, n\leq j\equiv n(\text{mod } 4)<n+m\\
	0&\text{ otherwise.}
	\end{cases} 
	\end{align*} 
	 Let $w\in E_{\infty}^{0,n}$ and  $u\in E_{\infty}^{4,0}$ be elements corresponding to permanent cocycles $1\otimes x\in E_{2}^{0,n}$ and $t\otimes 1\in E_{2}^{4,0}$, respectively. Then $w^2=0$ and $ u^{\frac{m+1}{4}}=0$. There exist $v\in H^n(X_G)$ corresponding to $w\in E_{\infty}^{0,n}$ such that $i^*(v)=x$. We have  $v^2 =\alpha u^{\frac{n}{2}}+\beta u^{\frac{n}{4}}v$, where $\alpha,\beta\in\mathbb{Q}$. Thus, 
	$$H^*(X_G)=\frac{\mathbb{Q}[u,v]}{\langle u^{\frac{m+1}{4}},v^2 -\alpha u^{\frac{n}{2}}-\beta u^{\frac{n}{4}}v \rangle}$$
	where $\deg u=4,\deg v=n,\alpha,\beta\in\mathbb{Q},\beta=0$ if $n\not\equiv0(\text{mod } 4)$ and $\alpha=0$ if either $n\not\equiv2(\text{mod } 4)$ or $2n\geq m$. This realizes case(iii) of the theorem.

	Now, we assume $n=m$. If $d_{n+1}(1\otimes x)=ct^{\frac{n+1}{4}}\otimes1$ and $d_{n+1}(1\otimes y)=0$ for some $0\neq c\in\mathbb{Q}$, then it is same as case(i). Now, suppose $d_{n+1}(1\otimes x)=ct^{\frac{n+1}{4}}\otimes1$ and $d_{n+1}(1\otimes y)=dt^{\frac{n+1}{4}}\otimes 1$ for some $0\neq c,d\in\mathbb{Q}$. Then, $n\equiv 3(\text{mod } 4)$,  $d_{n+1}(1\otimes (c_1 x+c_2 y))=t^{\frac{n+1}{4}}\otimes (c_1c+c_2 d)$, $d_{n+1}(1\otimes xy)=t^{{\frac{n+1}{4}}}\otimes(cy-dx)$ and   $E_2^{*,*}=E_{n+1}^{*,*}$.  This implies that $d_{n+1}:E_{n+1}^{k,2n}\to E_{n+1}^{k+n+1,n}$ is injective; and
	$\ker(d_{n+1}:E_{n+1}^{4k,n}\to E_{n+1}^{4k+n+1,0})\cong\mathbb{Q}$ and $\im (d_{n+1}:E_{n+1}^{4k-n-1,2n}\to E_{n+1}^{4k,n})\cong \mathbb{Q}$  with basis $\{t^k\otimes(cy-dx)\}$ and $\{t^{k-\frac{n+1}{4}}\otimes (cy-dx)\}$, respectively, for all $k\geq 0$. So, we get $E_{n+2}^{4k,0}=E_{n+2}^{4k,n}\cong \mathbb{Q}$ with basis $\{t^k\otimes 1\}$ and $\{t^k\otimes (cy-dx)\}$, respectively, for all $0\leq k<\frac{n+1}{4}$; and trivial otherwise.  So, $E_{\infty}^{*,*}=E_{n+2}^{*,*}$. 
	So, we have
	\begin{align*}
	H^j(X_G)=
	\begin{cases}
	\mathbb{Q}&\text{ if }  0\leq j\equiv0(\text{mod } 4)< n, n\leq j\equiv n(\text{mod } 4)<2n\\
	0&\text{ otherwise.}
	\end{cases} 
	\end{align*} 	
	 Let $w\in E_{\infty}^{0,n}$ and  $u\in E_{\infty}^{4,0}$ be elements corresponding to permanent cocycles $1\otimes (cy-dx)\in E_{2}^{0,n}$ and $t\otimes 1\in E_{2}^{4,0}$, respectively. Then  $w^2=0$ and $u^{\frac{n+1}{4}}=0$. There exist $v\in H^n(X_G)$ corresponding to $w\in E_{\infty}^{0,n}$ such that $i^*(v)=cy-dx$. We have $v^2=0$, and thus
	 $$H^*(X_G)=\frac{\mathbb{Q}[u,v]}{\langle u^{\frac{n+1}{4}},v^2 \rangle}$$
	where $\deg u=4,\deg v=n.$  This realizes case(i) of the theorem.
	\end{proof}

Now, we discuss free actions of $G=\mathbb{S}^3$
on $X\sim_2\mathbb{S}^n\times\mathbb{S}^m$. First, we derive following results.
\begin{lemma}
	Let $X$ be a  finitistic free $G$-space with $X\sim_2\S^n\times \S^m,1\leq n\leq m$, where $G=\S^d$, $d=1$ or $3$. Then $H^i(X/G)=0$ for all $i\geq n+m-(d-1)$.\label{L1S13}
	\begin{proof}
	By Proposition \ref{S1 and S3 pre}, $H^i(X/G)=0$ for all $i> n+m$. For $n+m-(d-1)\leq i\leq n+m$, the result follows 	by taking $n+m+1\leq k\leq n+m+d$ in the Gysin sequence of the sphere bundle $G\hookrightarrow X\to X/G$.
		\end{proof}	
\end{lemma}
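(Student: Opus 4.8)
My plan is to combine the two facts recorded in the preliminaries. By the Kunneth formula $H^*(X;\Z_2)=\Z_2[x,y]/\langle x^2,y^2\rangle$ with $\deg x=n$ and $\deg y=m$, so the top nonzero group of $X$ sits in degree $n+m$ and $H^i(X)=0$ for all $i>n+m$. Feeding this bound into Proposition \ref{S1 and S3 pre} gives at once $H^i(X/G)=0$ for all $i>n+m$. What is left to prove is the vanishing on the narrow band $n+m-(d-1)\le i\le n+m$, which is a single degree when $d=1$ and three degrees when $d=3$; for this I would use the Gysin sequence of the principal bundle $G\hookrightarrow X\xrightarrow{p}X/G$, viewed as the sphere bundle $\S^d\hookrightarrow X\to X/G$ (with $\Z_2$ coefficients it is automatically oriented, so the Gysin sequence applies).

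For an index $i$ in the band I would read off the three-term exact segment
\[
H^{i+d}(X)\xrightarrow{\ \rho\ }H^i(X/G)\xrightarrow{\ \cup u\ }H^{i+d+1}(X/G),
\]
where $u\in H^{d+1}(X/G)$ is the characteristic class of the bundle. Since $i\ge n+m-(d-1)$ forces $i+d\ge n+m+1$, the left-hand group $H^{i+d}(X)$ vanishes, so $\cup u$ is injective; and since $i+d+1>n+m$ the right-hand group $H^{i+d+1}(X/G)$ has already been shown to be $0$ in the first step. Thus $H^i(X/G)$ embeds into $0$ and is itself $0$. Letting the degree $k=i+d$ range over $n+m+1,\dots,n+m+d$ is exactly the reindexing that sweeps out the entire band.

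The argument is really just bookkeeping with the Gysin sequence, so I do not expect a substantive obstacle; the only point that has to be justified rather than assumed is that $p\colon X\to X/G$ genuinely is a (mod $2$ oriented) $\S^d$-bundle, which is what makes the Gysin sequence available. This is precisely where the freeness of the action and the hypothesis that $G=\S^d$ is a compact Lie group are used: the orbit map of a free compact Lie group action is a principal $G$-bundle, hence a locally trivial $\S^d$-bundle, and orientability over $\Z_2$ is automatic. Once this is in place the injectivity of $\cup u$ is forced and the conclusion follows.
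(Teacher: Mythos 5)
Your proof is correct and follows exactly the paper's route: Proposition~\ref{S1 and S3 pre} handles degrees $i>n+m$, and the segment $H^{i+d}(X)\to H^i(X/G)\xrightarrow{\cup u}H^{i+d+1}(X/G)$ of the Gysin sequence of $\S^d\hookrightarrow X\to X/G$, taken for $i+d$ between $n+m+1$ and $n+m+d$, kills the remaining band. The paper states this tersely; your write-up merely makes explicit the injectivity of $\cup u$ and the mod~$2$ orientability, both of which are the intended justifications.
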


\begin{lemma}
	Let $X$ be a  finitistic free $G$-space with $X\sim_2\S^n\times \S^m,1\leq n\leq m$, and $p:X\to X/G$ be the orbit map, where $G=\S^d$, $d=1$ or $3$. Then $p^*:H^i(X/G)\to H^i(X)$ cannot be nontrivial for both $i=n$ and $m$. \label{L2S3}
	\begin{proof}
		Assume otherwise. Then, there exist nonzero elements $u\in H^n(X/G)$ and $v\in H^m(X/G)$ such that $p^*(u)=x$ and $p^*(v)=y$. Consequently, $p^*(uv)=xy\neq 0$ in $H^{n+m}(X)$. This implies that $uv\neq 0$ in $H^{n+m}(X/G)$, a contradiction. 
	\end{proof}
\end{lemma}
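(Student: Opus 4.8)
The plan is to argue by contradiction, the whole point being that $p^*$ is a ring homomorphism and that the top product $xy$ is forced to survive into $X/G$, where it cannot live because of the dimension bound already proved in Lemma \ref{L1S13}.

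First I would assume the negation: that $p^*$ is nontrivial in degree $n$ \emph{and} in degree $m$. Recall from the Künneth description $H^*(X)=R[x,y]/\langle x^2,y^2\rangle$ (with $R=\Z_2$) that, for $n<m$, the groups $H^n(X)$ and $H^m(X)$ are one-dimensional, generated by $x$ and $y$ respectively. Hence "$p^*$ nontrivial in degree $n$" means $\im\big(p^*\colon H^n(X/G)\to H^n(X)\big)$ is all of $H^n(X)$, so there is a class $u\in H^n(X/G)$ with $p^*(u)=x$, and likewise a class $v\in H^m(X/G)$ with $p^*(v)=y$. (When $n=m$ the same reasoning applies verbatim, the hypothesis now meaning that the image of $p^*$ in degree $n$ contains both independent classes $x$ and $y$.)

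The key step is multiplicativity: since $p\colon X\to X/G$ is a continuous map, $p^*$ is a homomorphism of cohomology rings, so
$p^*(uv)=p^*(u)\,p^*(v)=xy.$
The class $xy$ is exactly the generator of $H^{n+m}(X)\cong R$ and is therefore nonzero; since $p^*(uv)\neq 0$ we must have $uv\neq 0$ in $H^{n+m}(X/G)$.

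Finally I would invoke Lemma \ref{L1S13}, which gives $H^i(X/G)=0$ for all $i\geq n+m-(d-1)$. As $n+m\geq n+m-(d-1)$ holds for both $d=1$ and $d=3$, this yields $H^{n+m}(X/G)=0$, contradicting $uv\neq 0$. This contradiction proves the lemma. There is no real obstacle here beyond correctly reading "nontrivial" as "surjects onto the full (one-dimensional) target," so that genuine preimages $u,v$ of $x,y$ exist; everything else is the elementary combination of the ring-map property of $p^*$ with the already-established vanishing range for $H^*(X/G)$.
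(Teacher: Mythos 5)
Your proof is correct and follows essentially the same route as the paper's: take preimages $u,v$ of $x,y$, use that $p^*$ is a ring homomorphism to get $p^*(uv)=xy\neq 0$, and contradict the vanishing of $H^{n+m}(X/G)$ from Lemma \ref{L1S13}. Your write-up is in fact slightly more careful than the paper's, which leaves the appeal to Lemma \ref{L1S13} and the reading of ``nontrivial'' implicit.
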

Now, we determine the orbit spaces of a finitistic space $X\sim_2 \S^n\times\S^m,1\leq n\leq m$ for $G=\S^d, d=1$ or 3, actions.
\begin{theorem}\label{TS3}
		Let $X$ be a  finitistic free $G$-space with $X\sim_2\S^n\times \S^m,1\leq n\leq m$, where $G=\S^3$. Then $H^*(X/G)$ is isomorphic to one of the following graded algebras:
	\begin{enumerate}
		\item[(i)] $\mathbb{Z}_2[u, v ]/\langle u^{\frac{m+1}{4}}, v^2+\alpha vu^{\frac{n}{4}}+\beta u^\frac{n}{2}\rangle$, where $\deg u=4$, $\deg v=n$, $m\equiv$3(mod 4), $\beta=0$ if  $ m<2n$ and $n$ is even; and $\alpha=\beta=0$ if $n$ is odd.

		\item[(ii)] $\mathbb{Z}_2[u,v]/\langle u^{\frac{n+m+1}{4}}, vu^{\frac{m-n+1}{4}}, v^2+\alpha vu^{\frac{n}{4}} +\beta u^{\frac{n}{2}} \rangle$, where $\deg u=4$, $\deg v=n$, $m-n\equiv 3$(mod 4), $m\equiv$3(mod 4) and $\alpha=0 $ if $m<2n$.
		
		\item[(iii)] $\mathbb{Z}_2[u, v]/\langle u^{\frac{n+1}{4}}, v^2\rangle$, where $\deg u=2$, $\deg v=m$ and $n\equiv 3$(mod 4).
	\end{enumerate}
	\begin{proof}
		Recall that the Gysin sequence of the  sphere bundle $G\hookrightarrow X\stackrel{p}{\longrightarrow} X/G$ is:
		\begin{align*}
		\cdots\longrightarrow H^{i}(X)\stackrel{\rho}{\longrightarrow}H^{i-3}(X/G)\stackrel{\cup}{\longrightarrow}H^{i+1}(X/G) \stackrel{p^*}{\longrightarrow}H^{i+1}(X)\stackrel{\rho}{\longrightarrow}H^{i-2}(X/G)\longrightarrow\cdots
		\end{align*}
		which begins with
		\begin{align*}
		0\longrightarrow &H^3(X/G)\stackrel{p*}{\longrightarrow}H^3(X)\stackrel{\rho}{\longrightarrow}H^0(X/G)\stackrel{\cup}{\longrightarrow}H^4(X/G)\stackrel{p^*}{\longrightarrow}H^4(X)\longrightarrow\cdots
		\end{align*}
Then, for $0<i<n-1$, $n<i<m-1$  and $m<i<n+m-1$, we have $H^{i-3}(X/G)\cong H^{i+1}(X/G)$.  	We also have  	$H^i(X/G)\cong H^i(X)$ for $0\leq i\leq 2$. This gives that $\Z_2\cong H^i(X/G)\cong H^{i+4}(X/G)$ for $0\leq i\equiv 0$(mod 4)$<n-4$ and $H^i(X/G)=0$ for $0<i\equiv j$(mod 4)$<n$, where $1\leq j\leq 3$. 
Let $u\in H^4(X/G)$ be the image of the generator $1\in H^0(X/G)$ under the homomorphism $\cup:H^0(X/G)\to H^4(X/G)$. So, inductively generator of $H^{i}(X/G)$ is $\{u^{\frac{i}{4}}\}$ for  all $0\leq i\equiv 0$(mod 4)$<n$.  As $\mathbb{S}^3$ acts freely on $X$, both $n$ and $m$  cannot be even \cite{Dotzel}. It is clear that isomorphisms appears in differences of mod 4, so we consider the following cases:

		\noindent\textbf{Case(I)}: When $n\equiv 1$(mod 4).

	 As $n\equiv 1$(mod 4), $H^{n-1}(X/G)\cong \mathbb{Z}_2$  with the basis 
		$\{u^{\frac{n-1}{4}}\}$ and $H^{n-j}(X/G)=0$ for  $2\leq j\leq 4$. This implies that $H^{n+1}(X/G)=H^{n+2}(X/G)=0$,  $H^n(X/G)\cong H^{n}(X)$ and $H^{n+3}(X/G)\cong H^{n-1}(X/G)$. So the basis for $H^n(X/G)$ and $H^{n+3}(X/G)$ are $\{v\}$ and $\{u^{\frac{n+3}{4}}\}$, respectively where $p^*(v)=x$. Consequently,  $H^{i}(X/G)\cong\Z_2 $ for $n\leq i\equiv j$(mod 4)$<m$, $j=0$ or 1; and $H^{i}(X/G)=0$ for  $n<i\equiv j$(mod 4)$<m$, $j=2$  or 3. Inductively, the bases  for $H^{i}(X/G)$ are  $\{vu^{\frac{i-n}{4}}\}$ when $n\leq  i\equiv 1$(mod 4)$<m$,   and $\{u^{\frac{i}{4}}\}$ when $n\leq i\equiv$0(mod 4)$<m$. Now, the possible  values of $m$ are as (a) $m\equiv$3(mod 4), (b) $m\equiv$2(mod 4), (c) $m\equiv$1(mod 4) and (d) $m\equiv$0(mod 4).

		\noindent \textbf{Subcase(a)}:  	
		As $m\equiv$3(mod 4), we get  $0=H^{m-1}(X/G)=H^{m-4}(X/G)$, and $\Z_2\cong H^{m-2}(X/G)\cong H^{m-3}(X/G)$. 
		By Lemma \ref{L2S3}, $p^*:H^m(X/G)\to H^m(X)$ must be trivial. 
		This implies that $H^{m+2}(X/G)\cong H^{m-2}(X/G)$, $H^{m+3}(X/G)=H^m(X/G)=0$ and $\rho:H^{m}(X)\to H^{m-3}(X/G)$ is an isomorphism. Consequently, $H^{m+1}(X/G)=0$.  We have  $H^{i-3}(X/G)\cong H^{i+1}(X/G)$ for $m<i<n+m-1$. So, we get $H^{m+n-j}(X/G)=0$ for $j=1,2$ and 4; and $H^{m+n-3}(X/G)\cong\Z_2$.	Accordingly, we get  
		\begin{align*}
		H^i(X/G)=\begin{cases}
		\mathbb{Z}_2 &\text{ if } 0\leq i\equiv 0\text{(mod 4)}<m,\,n\leq i\equiv 1\text{(mod 4)}<n+m\\
		0 &\text{ otherwise}
		\end{cases}
		\end{align*}
		If $m=n+2$ then we get similar cohomology groups. Note that the  basis of $H^{m-3}(X/G)$ is $\{u^{\frac{m-3}{4}}\}$. As the homomorphism $\cup:H^{m-3}(X/G)\to H^{m+1}(X/G)$ is trivial, we get $u^{\frac{m+1}{4}}=0$. Since $n\equiv 1$(mod 4), we get $H^{2n}(X/G)=0$. Consequently, $v^2=0$. Therefore, $H^*(X/G)$ is given by
		${\mathbb{Z}_2[u,v]}/{\langle u^\frac{m+1}{4},v^2\rangle} $
		where $\deg u=4,\deg v=n$.
		This realizes possibility (i).

		\noindent\textbf{Subcase(b):}		
		As $m\equiv$2(mod 4), we get  $H^{m-4}(X/G)=H^{m-3}(X/G)=0$ and $H^{m-1}(X/G)\\\cong H^{m-2}(X/G) \cong\mathbb{Z}_2$.
	By Lemma \ref{L2S3}, $p^*:H^m(X/G)\to H^m(X)$ must be trivial. 
		This implies that $H^{m}(X/G)=H^{m+1}(X/G)=0$, $H^{m+2}(X/G)\cong H^{m-2}(X/G)$ and $H^{m+3}(X/G)\cong H^{m-1}(X/G)$. Consequently, $H^{m+n-2}(X/G)\cong \mathbb{Z}_2$, which contradicts that $G$ acts freely on $X$. 	If $m=n+1$ then  we get same cohomology groups.

		\noindent\textbf{Subcase(c):} When $m\equiv 1$(mod 4). If $n\neq m$ then we get  $H^{m-3}(X/G)=H^{m-2}(X/G)$$\\=0$ and $H^{m-4}(X/G)\cong H^{m-1}(X/G)\cong\Z_2$.
		By the exactness of the Gysin sequence,  $p^*:H^{m}(X/G)\to H^m(X)$  must be nontrivial, a contradiction. If $n=m$ then  $H^n(X/G)\cong H^n(X)\cong\mathbb{Z}_2\oplus\mathbb{Z}_2$ and $H^{n+j}(X/G)=0$ for $1\leq j\leq 3$. This gives $H^{2n-1}(X/G)\cong \mathbb{Z}_2\oplus\mathbb{Z}_2$, a contradiction.
		
		\noindent\textbf{Subcase(d)}: 	
	As $m\equiv$0(mod 4), we get $H^{m-1}(X/G)=H^{m-2}(X/G)=0$ and $H^{m-4}(X/G)\\\cong H^{m-3}(X/G)\cong\Z_2$. 
		By Lemma \ref{L2S3}, $p^*:H^m(X/G)\to H^m(X)$ must be trivial.  
		This implies that  $H^m(X/G)\cong\mathbb{Z}_2$. Consequently, $H^{m+n-1}(X/G)\cong\Z_2$ which contradicts that $G$ acts freely on $X$. If $m=n+3$ then  we get the same cohomology groups.

		\noindent\textbf{Case(II)}: When $n\equiv$3(mod 4). First, we consider $n\neq m$.

		As $n\equiv$3(mod 4), we get $H^{n-3}(X/G)\cong\mathbb{Z}_2$  with the basis 
		$\{u^{\frac{n-3}{4}}\}$ and $H^{n-j}(X/G)$$\\=0$ for  $j=1,2,4$.  By the exactness of the Gysin sequence, $H^{n+j}(X/G)=0$ for $j=2$ and 3. There are two cases:

		\noindent\textbf{Subcase(i):} When $p^*:H^n(X/G)\to H^n(X)$ is trivial.

		In this case, $\rho:H^n(X)\to H^{n-3}(X/G)$ is an isomorphism and $H^{n+j}(X/G)=0$ for $j=0,1$.  Accordingly,  $H^i(X/G)=0$ for  $n\leq i<m$. 
		By the exactness of the Gysin sequence, $H^m(X/G)\cong H^m(X)$ and $H^{m+j}(X/G)=0$ for  $1\leq j\leq 3$. Consequently, $H^{m+n-3}(X/G)\cong\Z_2$ and  $H^{m+n-j}(X/G)=0$ for  $j=1,2$ and 4.
 Thus, we have  
		\begin{align*}
		H^i(X/G)=\begin{cases}
		\mathbb{Z}_2 &\text{ if } 0\leq i\equiv 0\text{(mod 4)}<n,\,m\leq i\equiv m\text{(mod 4)}<n+m\\
		0 &\text{ otherwise}
		\end{cases}
		\end{align*}
	As $\cup:H^{n-3}(X/G)\to H^{n+1}(X/G)$ is trivial, we get $u^{\frac{n+1}{4}}$=0.	Since $p^*:H^m(X/G)\to H^m(X)$ is an isomorphism, we have $p^*(v)=y$ where $\{v\}$ is the basis of $H^m(X/G)$. Now $n<m$ implies that  $v^2=0$.   Therefore, $H^*(X/G)$ is given by
		$
		{\mathbb{Z}_2[u,v]}/{\langle u^\frac{n+1}{4},v^2\rangle}$
		where $\deg u=4,\deg v=m$. This realizes possibility (iii).

		\noindent\textbf{Subcase(ii):} When $p^*:H^n(X/G)\to H^n(X)$ is nontrivial.

		Then $H^n(X/G)\cong H^{n}(X)$ and $H^{n+1}(X/G)\cong H^{n-3}(X/G)$. From this we get,  
		$H^i(X/G)\\=0$ for $n<i\equiv j$(mod 4)$<m$, $j=1$ or 2 and $H^i(X/G)\cong\Z_2$ for $n\leq i\equiv j$(mod 4)$<m$, $j= 0$ or 3. Inductively, the  bases for $H^i(X/G)$ are $\{u^{\frac{i}{4}}\}$ when $n<i\equiv 0$(mod 4)$<m$, and  $\{vu^{\frac{i-n}{4}}\}$ when $n\leq i\equiv 3$(mod 4)$<m$ where $v\in H^n(X/G)$ such that $p^*(v)=x$.
		There are four possibilities for $m$: (a) $m\equiv 3$(mod 4), (b) $m\equiv 2$(mod 4), (c) $m\equiv 1$(mod 4), and (d) $m\equiv 0$(mod 4)

		\noindent \textbf{Subcase(a):} As  $m\equiv 3$(mod 4), we get  
		 $ H^{m-2}(X/G)= H^{m-1}(X/G)=0$ and $ H^{m-3}(X/G)\\ \cong  H^{m-4}(X/G)\cong\Z_2$.
		By Lemma \ref{L2S3}, $p^*:H^m(X/G)\to H^m(X)$ must be trivial. This gives that $H^m(X/G)\cong H^{m-4}(X/G)$  and $H^{m+j}(X/G)=0$ for $1\leq j\leq 3$.  Consequently, we have $ H^{m+n-3}(X/G)\cong\Z_2$ and  $H^{m+n-j}(X/G)=0$ for $j=1,2$ and 4.	Thus, we have
		\begin{align*}
		H^i(X/G)=\begin{cases}
		\mathbb{Z}_2 &\text{ if } 0\leq i\equiv 0\text{(mod 4)}<m,\,n\leq i\equiv 3\text{(mod 4)}<n+m, \\
		0 &\text{ otherwise}
		\end{cases}
		\end{align*}
		As $\cup:H^{m-3}(X/G)\to H^{m+1}(X/G)$ is trivial, we get $u^{\frac{m+1}{4}}=0$. Since $n\equiv 3\text{(mod 4)}$,  $v^2=0$. Therefore, $H^*(X/G)$ is given by
		${\mathbb{Z}_2[u,v]}/{\langle u^\frac{m+1}{4},v^2\rangle} $
		where $\deg u=4,\deg v=n$.
		This realizes possibility (i).

\noindent \textbf{Subcase(b):} As $m\equiv$2(mod 4), we have $H^{m-2}(X/G)=0$. This implies that $H^{m+2}(X/G)\\ \cong H^{m-2}(X/G)$.  Consequently, $H^{m+n-1}(X/G)\cong\Z_2$ which contradicts that $G$ acts freely on $X$.	If $m=n+3$, then   we get the same cohomology groups.

\noindent \textbf{Subcase(c)\&(d):} For $m\equiv$1(mod 4) or $m\equiv$0(mod 4), we get $H^{m-3}(X/G)=0$. By the exactness of the Gysin sequence, $p^*:H^m(X/G)\to H^m(X)$ must be nontrivial,  a contradiction. 		We get the same result for $m=n+2$ or $m=n+1$.
		
	\noindent Now, we consider $n=m$.

	 It is clear that $H^{n+j}(X/G)=0, j=2,3$. As $H^n(X)\cong \mathbb{Z}_2\oplus\mathbb{Z}_2$, $\rho:H^n(X)\to H^{n-3}(X/G)$ cannot be injective and so $p^*:H^{n}(X/G)\to H^n(X)$ must be nontrivial. Next, we observe that $p^*:H^n(X/G)\to H^n(X)$ can not be onto. Let if possible, then $H^n(X/G)\cong\mathbb{Z}_2\oplus\mathbb{Z}_2$. Therefore, there exist nonzero element $u,v\in H^n(X/G)$ such that $p^*(u )=x$ and $p^*(v)=y$. So,  $p^*(uv )=xy\neq 0 $ in $H^{2n}(X)$. Consequently,  $0\neq uv \in H^{2n}(X/G)$ which contradicts  Lemma \ref{L1S13}. Therefore, $\im (p^*:H^n(X/G)\to H^n(X))\cong\mathbb{Z}_2,$  and hence  $H^n(X/G)\cong\mathbb{Z}_2$. Let $\{v\}$ be the basis of $H^n(X/G)$. Then,   $p^*(v)=x$ or $y$. As the sequence
	$
	0\rightarrow \ker \rho\rightarrow H^n(X)\stackrel{\rho}{\rightarrow}\im \rho \rightarrow 0 
	$
	is split exact, we get $\im \rho \cong\mathbb{Z}_2$. This gives that $H^{n+1}(X/G)=0$. Accordingly, we get $
	 H^n(X/G)\cong H^{2n-3}(X/G)$  and $H^{2n-j}(X/G)=0$ for $j=1,2$ and 4. Thus, we have
	\begin{align*}
	H^i(X/G)=\begin{cases}
	\Z_2 &\text{ if } 0\leq i\equiv 0\text{(mod 4)}<n,  n\leq i\equiv 3\text{(mod 4)}<2n\\
	0 &\text{ otherwise}
	\end{cases}
	\end{align*} 
	As $H^{n+1}(X/G)=H^{2n}(X/G)=0$, we have $u^{\frac{n+1}{4}}=0=v^2$.
	 Therefore, $H^*(X/G)$ is given by
	${\mathbb{Z}_2[u,v]}/{\langle u^\frac{n+1}{4},v^2\rangle} $
	where $\deg u=4,\deg v=n$. This realizes possibility(iii).

		\noindent\textbf{Case(III)}: 	As $n\equiv $0(mod 4), $H^{n-4}(X/G)\cong\Z_2$ with basis $\{u^\frac{n-4}{4}\}$ and $H^{n-j}(X/G)=0$ for $1\leq j\leq 3$.
		This implies that $H^{n+j}(X/G)=0$ for all $1\leq j\leq 3$ and   $p^*:H^n(X/G)\to  H^{n}(X)$ is surjective. Now, the  sequence
	$
		0\to \im \cup \to H^n(X/G)\stackrel{p^*}{\rightarrow}H^n(X)\to 0
		$
		is split exact, therefore, 
		$H^n(X/G)\cong \mathbb{Z}_2\oplus\mathbb{Z}_2$ 
		with basis $\{u^{\frac{n}{4}},v\}$ where $p^*(v)=x$. From this we get, $H^i(X/G)=0$ for $n<i\equiv j $(mod 4)$<m$ for $1\leq j\leq 3$ and $H^i(X/G)\cong \Z_2\oplus\Z_2$ for $n\leq i\equiv 0 $(mod 4)$<m$ with basis $\{vu^{\frac{i-n}{4}},u^{\frac{i}{4}}\}$. As both $n$ and $m$ can not be even, there are two possibilities for $m$: (a) $m\equiv$3(mod 4), and $m\equiv$1(mod 4)

		\noindent \textbf{Subcase(a):}		
		As $m\equiv 3$(mod 4), we get $H^{m-j}(X/G)=0$ for $j=1,2$ or 4 and $H^{m-3}(X/G)\\ \cong \Z_2\oplus\Z_2$. By Lemma \ref{L2S3},  $p^*:H^m(X/G)\to H^m(X)$ must be trivial. 
		By the exactness of the Gysin sequence, $H^{m+j}(X/G)=0$  for $j=0,2,3$ and $\rho:H^{m}(X)\to H^{m-3}(X/G)$ is injective. This gives that $\ker(\cup:H^{m-3}(X/G)\to H^{m+1}(X/G))\cong\mathbb{Z}_2$ and hence  $H^{m+1}(X/G)\cong \mathbb{Z}_2$. Consequently,  $H^{n+m-j}(X/G)=0$ for $j=1,2,4$ and $H^{n+m-3}(X/G)\cong H^{m+1}(X/G)$. Thus, we have
		\begin{align*}
		H^i(X/G)=\begin{cases}
		\mathbb{Z}_2 &\text{ if } 0\leq i\equiv 0\text{(mod 4)} <n,\,m< i\equiv 0\text{(mod 4)}<n+m\\
		\mathbb{Z}_2\oplus\mathbb{Z}_2 &\text{ if }n\leq i\equiv0\text{(mod 4)}<m\\
		0 &\text{ otherwise}
		\end{cases}
		\end{align*}
		Note  that a basis for $H^{m-3}(X/G)$ is $\{vu^{\frac{m-n-3}{4}},u^{\frac{m-3}{4}}\}$. As $H^{m+1}(X/G)\cong\mathbb{Z}_2$, one basis element vanish under the map $\cup:H^{m-3}(X/G)\to H^{m+1}(X/G)$.

		 \noindent So, there are two  cases: ($vu^{\frac{m-n+1}{4}}=0$ \& $u^{\frac{m+1}{4}}\neq 0$) or ($vu^{\frac{m-n+1}{4}}\neq 0$ \& $u^{\frac{m+1}{4}}= 0$).

		 If $vu^{\frac{m-n+1}{4}}=0$ and $u^{\frac{m+1}{4}}\neq 0$ then the basis of $H^{m+1}(X/G)$ is $\{u^{\frac{m+1}{4}}\}$, and hence basis of $H^{n+m-3}(X/G)$ is $\{u^{\frac{n+m-3}{4}}\}$. It is clear that  $u^{\frac{n+m+1}{4}}=0$. As $2n\equiv 0$(mod 4) and $v^2\in H^{2n}(X/G)$,  we have $v^2=\alpha v u^{\frac{n}{4}}+\beta u^{\frac{n}{2}}$ for $\alpha,\beta\in\mathbb{Z}_2$ and $\alpha=0$ if $2n>m$. Therefore, the graded algebra of $X/G$ is given by
		$$H^*(X/G)=\frac{\mathbb{Z}_2[u,v]}{\langle u^\frac{n+m+1}{4},vu^{\frac{m-n+1}{4}},v^2+\alpha v u^{\frac{n}{4}}+\beta u^{\frac{n}{2}}\rangle} $$
		where $\deg u=4,\deg v=n$, $\alpha,\beta\in\mathbb{Z}_2$ and $\alpha=0$ if $2n>m$.
		This realizes possibility (ii) of the theorem.

		 If $vu^{\frac{m-n+1}{4}}\neq 0$ and $u^{\frac{m+1}{4}}= 0$ then the   basis for $H^{m+1}(X/G)$ is $\{vu^{\frac{m-n+1}{4}}\}$ and hence the basis for $H^{n+m-3}(X/G)$ is $\{vu^{\frac{m-3}{4}}\}$. Obviously, $vu^{\frac{m+1}{4}}=0$. As above, we get  $v^2=\alpha v u^{\frac{n}{4}}+\beta u^{\frac{n}{2}}$ for $\alpha,\beta\in\mathbb{Z}_2$ and $\beta=0$ if $2n>m$. Hence, we have
		\begin{align*}
		H^*(X/G)=\frac{\mathbb{Z}_2[u,v]}{\langle u^\frac{m+1}{4},v^2+\alpha v u^{\frac{n}{4}}+\beta u^{\frac{n}{2}}\rangle}
		\end{align*}
		where $\deg u=4,\deg v=n$, $\alpha,\beta\in\mathbb{Z}_2$ and $\beta=0$ if $2n>m$. This realizes possibility(i) of the theorem.

		\noindent \textbf{Subcase(b):} As  $m\equiv 1\text{(mod 4)} $, we get  $H^{m-1}(X/G)\cong\mathbb{Z}_2\oplus\mathbb{Z}_2$, and $H^{m-j}(X/G)=0$ for $j=2,3$ and 4. By the exactness of the Gysin sequence,  $p^*:H^m(X/G)\to H^m(X)$ must be nontrivial, a contradiction. For $m=n+1$,  we get the same  cohomology groups.

		\noindent\textbf{Case(IV)}: When $n\equiv$2(mod 4).

		As $n\equiv$2(mod 4), we get $H^{n-2}(X/G)\cong \mathbb{Z}_2$ with the basis $\{u^{\frac{n-2}{4}}\}$ and $H^{n-j}(X/G)=0$ for $j=1,3,4$.  By the exactness of the Gysin sequence,    $H^n(X/G)\cong  H^{n}(X)$, $H^{n+1}(X/G)=H^{n+3}(X/G)=0$ and $H^{n+2}(X/G)\cong H^{n-2}(X/G)$. The Bases for $H^n(X/G)$ and $H^{n+2}(X/G)$ are $\{v\}$ and  $\{u^{\frac{n+2}{4}}\}$, respectively, where $p^*(v)=x$.  Further,  $H^i(X/G)=0$ for $n<i\equiv$j(mod 4)$<m$, $j=1,3$; $H^i(X/G)\cong\Z_2$ for $n\leq i\equiv$2(mod 4)$<m$ with basis $\{vu^{\frac{i-n}{4}}\}$ and  $H^i(X/G)\cong\Z_2$ for $n< i\equiv$0(mod 4)$<m$ with basis $\{u^{\frac{i}{4}}\}$. By Lemma \ref{L2S3},  $p^*:H^m(X/G)\to H^m(X)$ must be trivial. Here, there are two possibilities for $m$: (a) $m\equiv$3(mod 4); and (b) $m\equiv$1(mod 4).

		\noindent \textbf{Subcase(a):} As $m\equiv$3(mod 4), we get $H^{m-3}(X/G)\cong H^{m-1}(X/G)\cong\Z_2$ and  $H^{m-2}(X/G)\\=H^{m-4}(X/G)=0$. Consequently,  $H^{m+3}(X/G)\cong H^{m-1}(X/G)$ and $H^{m+j}(X/G)=0$ for all $0\leq j\leq 2$, and hence  $H^{n+m-j}(X/G)=0$ for $j=1,2,4$ and $H^{n+m-3}(X/G)\cong\Z_2$. Thus, we have
		\begin{align*}
		H^i(X/G)=\begin{cases}
		\mathbb{Z}_2 &\text{ if } 0\leq i\equiv 0\text{(mod 4)}<m,\,n\leq i\equiv 2\text{(mod 4)}<n+m\\
		0 &\text{ otherwise}
		\end{cases}
		\end{align*}
It is clear that $u^{\frac{m+1}{4}}=0$.	As  $2n\equiv 0$(mod 4), we get $v^2=\alpha u^{\frac{n}{2}}$ for $\alpha\in\mathbb{Z}_2$ and $\alpha=0$ if $2n>m$. Therefore,
		$H^*(X/G)={\mathbb{Z}_2[u,v]}/{\langle u^\frac{m+1}{4},v^2+\alpha u^{\frac{n}{2}}\rangle} $
		where $\deg u=4$ and $\deg v=n$. This realizes possibility (i).
For $m=n+1$, we get the same result.

		\noindent \textbf{Subcase(b):} 
		As $m\equiv$1(mod 4), we get $ H^{m-j}(X/G)\cong\Z_2$ for $j=1,3$; and 0 for $j=2,4$. This gives that $H^m(X/G)=H^{m+1}(X/G)=0$. By the exactness of the Gysin sequence, we get $H^{m+3}(X/G)\cong H^{m-1}(X/G)$, $H^{m+2}(X/G)=0$. Consequently,  $H^{m+n-j}(X/G)=0$ for $j=1,2,4$ and $H^{m+n-3}(X/G)\cong\Z_2$. Thus, we have  
		\begin{align*}
		H^i(X/G)=\begin{cases}
		\mathbb{Z}_2 &\text{ if } 0\leq i\equiv 0\text{(mod 4)}<n+m,\,n\leq i\equiv 2\text{(mod 4)}<m\\
		0 &\text{ otherwise}
		\end{cases}
		\end{align*}
			It is clear that $u^{\frac{n+m+1}{4}}=vu^{\frac{m-n-3}{4}}=0$.
		As  $2n\equiv 0$(mod 4), we get $v^2=\alpha u^{\frac{n}{2}}$ for $\alpha\in\mathbb{Z}_2$. Therefore, the graded cohomology algebra of $X/G$  is 
		$$H^*(X/G)=\frac{\mathbb{Z}_2[u,v]}{\langle u^\frac{n+m+1}{4},vu^{\frac{m-n+1}{4}},v^2+\alpha u^{\frac{n}{2}}\rangle} $$
		where $\deg u=4,\deg v=n$, $\alpha\in\mathbb{Z}_2$ and this realizes possibility (ii). For $m=n+3$, we get the same result.     \qedhere

			\end{proof}
\end{theorem}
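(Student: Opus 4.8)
The plan is to derive both the additive and the multiplicative structure of $H^*(X/G)$ directly from the Gysin sequence of the $\mathbb{S}^3$-bundle $\mathbb{S}^3 \hookrightarrow X \xrightarrow{p} X/G$. Since the fibre is $\mathbb{S}^3$, this sequence relates $H^*(X)$ and $H^*(X/G)$ with a degree shift of $4$ through cup product with the characteristic class $u \in H^4(X/G)$. First I would assemble the structural inputs already at hand: from Proposition \ref{S1 and S3 pre} and Lemma \ref{L1S13} one has $H^i(X/G)=0$ for $i \geq n+m-2$ (this is where freeness enters), while Lemma \ref{L2S3} forbids $p^*$ from being nonzero in both degrees $n$ and $m$. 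Because $p^*$ is an isomorphism in degrees $<3$, the low end is $H^0(X/G)=\mathbb{Z}_2$, $H^1(X/G)=H^2(X/G)=0$, and $u$ generates $H^4(X/G)$.

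Next I would compute the additive structure by iterating the Gysin sequence. In each of the gap ranges $0<i<n-1$, $n<i<m-1$, and $m<i<n+m-1$, the cohomology of $X$ vanishes, so cup product with $u$, namely $\cup u \colon H^{i-3}(X/G)\to H^{i+1}(X/G)$, is an isomorphism; this produces $4$-periodicity and shows each such group is a power of $u$, possibly multiplied by a fixed degree-$n$ class $v$ once such a class has been introduced. The only nontrivial transitions occur at degrees $n$, $m$, and $n+m$, where the generators $x$, $y$, $xy$ of $H^*(X)$ enter the sequence. At each of these the decisive datum is whether $p^*\colon H^n(X/G)\to H^n(X)$, respectively $p^*$ in degree $m$, is trivial or nonzero; Lemma \ref{L2S3} restricts the admissible combinations.

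The argument then branches on $n \bmod 4$ and $m \bmod 4$. I would first invoke that a free $\mathbb{S}^3$-action forces at least one of $n,m$ to be odd \cite{Dotzel}, and then trace the Gysin sequence from degree $n-4$ through $n+m$ in each admissible case. The freeness bound $H^i(X/G)=0$ for $i \geq n+m-2$ serves as a filter: the inconsistent congruence combinations are precisely those that leave a surviving $\mathbb{Z}_2$ in a degree $\geq n+m-2$, which I discard as contradicting freeness. The surviving branches fall into the three listed forms. The diagonal case $n=m$ must be handled separately, since there $H^n(X)\cong \mathbb{Z}_2\oplus\mathbb{Z}_2$; one shows $p^*$ in degree $n$ is nonzero but not surjective (again via Lemma \ref{L2S3}), giving $H^n(X/G)\cong\mathbb{Z}_2$ and landing in possibility (iii).

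Finally, with the additive structure fixed, I would read off the ring structure by taking $u\in H^4(X/G)$ together with a class $v$ in degree $n$ (or degree $m$ in case (iii)) lifting the relevant generator. The relation $u^k=0$ is forced at the degree where $4$-periodicity first breaks, i.e.\ where $\cup u$ ceases to be injective, and the relation on $v^2$ is obtained by expressing $v^2 \in H^{2n}(X/G)$ in the known basis $\{u^{n/2}, vu^{n/4}\}$ of that group; this is exactly where the coefficients $\alpha,\beta\in\mathbb{Z}_2$ enter. Their vanishing conditions are dimensional: $u^{n/2}$ sits in $H^{2n}(X/G)$, which is zero once $2n$ exceeds the top nonzero degree (so $\beta=0$ when $m<2n$), and $vu^{n/4}$ exists only when $4\mid n$. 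The hard part is the bookkeeping in this last stage combined with the casework: the additive part is a mechanical if lengthy Gysin iteration, but correctly tracking which of $x,y$ survives to the orbit space in every branch, eliminating the non-free congruence combinations through the $n+m-2$ bound, and pinning down the precise conditions under which $\alpha$ and $\beta$ vanish requires careful degree-by-degree tracking.
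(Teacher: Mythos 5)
Your proposal is correct and follows essentially the same route as the paper's proof: the Gysin sequence of $\S^3\hookrightarrow X\to X/G$ with its $4$-periodicity in the gap ranges, the vanishing bound $H^i(X/G)=0$ for $i\geq n+m-2$ from Lemma \ref{L1S13} as the freeness filter, Lemma \ref{L2S3} to control $p^*$ in degrees $n$ and $m$, the case split on $n,m \bmod 4$ with the separate diagonal case $n=m$, and the relations $u^k=0$ and $v^2=\alpha vu^{n/4}+\beta u^{n/2}$ read off where periodicity breaks and in the basis of $H^{2n}(X/G)$. The only cosmetic difference is that some eliminations in the paper come from Gysin exactness forcing $p^*$ nontrivial in degree $m$ (contradicting Lemma \ref{L2S3}) rather than from a surviving class above degree $n+m-2$, but your framework already accounts for both mechanisms.
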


Similarly, we get the cohomological structure of the orbit spaces of free $G=\mathbb{S}^1$ actions on a finitistic space $X\sim _2 \S^n\times \S^m$. For example, $G=\S^1$ acts freely on  $SU(3)\sim_2 \S^3\times\S^5$ by the component-wise multiplication but $SU(3)\not\approx \S^3\times\S^5$. Recall that the same cohomology algebra with rational coefficients is determined in \cite{Dotzel}. 

\begin{theorem}\label{TS1}
	Let $X$ be a finitistic free $G$-space with $X\sim_2\S^n\times \S^m,1\leq n\leq m$, where $G=\S^1$. Then $H^*(X/G)$ is isomorphic to one of the following graded algebras:
	\begin{enumerate}
		\item[(i)] $\mathbb{Z}_2[u, v ]/\langle u^{\frac{m+1}{2}}, v^2+\alpha u^n+\beta v u^\frac{n}{2}\rangle$, where $\deg u=2$, $\deg v=n$, $m$ is odd,  $\alpha=0$ if  $m\leq 2n$ and $\beta=0$ if $n$ is odd.

		\item[(ii)] $\mathbb{Z}_2[u,v]/\langle u^{\frac{n+m+1}{2}}, vu^{\frac{m-n+1}{2}}, v^2+\alpha u^n +\beta vu^{\frac{n}{2}} \rangle$, where $\deg u=2$, $\deg v=n$, $m-n$ is odd and $\beta=0 $ if $m<2n$ or $m$ is even.
		
		\item[(iii)] $\mathbb{Z}_2[u, v]/\langle u^{\frac{n+1}{2}}, v^2\rangle$, where $\deg u=2$, $\deg v=m$ and $n$ is  odd.
	\end{enumerate}
	\end{theorem}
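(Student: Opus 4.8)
The plan is to run the argument of Theorem \ref{TS3} essentially verbatim, the only structural change being that for $G=\S^1$ the orbit map $G\hookrightarrow X\stackrel{p}{\rightarrow}X/G$ is a circle bundle, so its Gysin sequence has period two and characteristic class $u\in H^2(X/G)$. Explicitly, with $\mathbb{Z}_2$ coefficients the sequence is
\begin{align*}
\cdots\rightarrow H^{i}(X)\stackrel{\rho}{\rightarrow}H^{i-1}(X/G)\stackrel{\cup}{\rightarrow}H^{i+1}(X/G)\stackrel{p^*}{\rightarrow}H^{i+1}(X)\stackrel{\rho}{\rightarrow}H^{i}(X/G)\rightarrow\cdots,
\end{align*}
beginning with $0\rightarrow H^1(X/G)\stackrel{p^*}{\rightarrow}H^1(X)$. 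Since $d=1$, Lemmas \ref{L1S13} and \ref{L2S3} apply directly: the first gives $H^i(X/G)=0$ for all $i\geq n+m$, and the second says $p^*$ cannot be nontrivial in both degrees $n$ and $m$. These are exactly the inputs used in the $\S^3$ argument.

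First I would record that the three listed conclusions are exhaustive. A free $\S^1$-action exhibits $X$ as a circle bundle over $X/G$, which has finite-dimensional mod $2$ cohomology by Proposition \ref{S1 and S3 pre}; the Leray--Serre spectral sequence then gives $\chi(X)=\chi(\S^1)\chi(X/G)=0$, so $(1+(-1)^n)(1+(-1)^m)=0$ and at least one of $n,m$ is odd. Hence every admissible pair satisfies ``$n$ odd'', ``$m$ odd'', or ``$m-n$ odd''; equivalently $n$ and $m$ are never both even. (Alternatively, the both-even combination produces a nonzero group in degree $\geq n+m$ in the computation below, contradicting freeness.)

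Next I would compute $H^*(X/G)$ degree by degree. For $0<i<n-1$ both $H^i(X)$ and $H^{i+1}(X)$ vanish, so $\cup u\colon H^{i-1}(X/G)\to H^{i+1}(X/G)$ is an isomorphism; together with $H^1(X/G)=0$ this gives $H^{2k}(X/G)=\mathbb{Z}_2\langle u^k\rangle$ and vanishing odd-degree groups below degree $n$. At degree $n$ the argument bifurcates according to whether $p^*\colon H^n(X/G)\to H^n(X)$ is trivial. If it is, then $\rho$ is injective on $H^n(X)$, the class $x$ is carried isomorphically into $H^{n-1}(X/G)$, $\cup u$ dies on $H^{n-1}(X/G)$ forcing $u^{(n+1)/2}=0$, and the surviving generator $v$ has degree $m$ with $v^2=0$ (since $2m\geq n+m$), yielding conclusion (iii). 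If $p^*$ is nontrivial, $x$ lifts to $v\in H^n(X/G)$ and one propagates the period-two isomorphisms through the bands $n\leq i<m$ and $m\leq i<n+m$, discarding each parity/dimension combination that would place a nonzero group in degree $\geq n+m$; this reproduces, with $\deg u=2$, the bookkeeping of the subcases of Theorem \ref{TS3} and leads to conclusions (i) and (ii).

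Finally I would pin down the ring structure. The truncation exponent of $u$ and the relation $vu^{(m-n+1)/2}=0$ of conclusion (ii) are read off from the first degrees at which multiplication by $u$ ceases to be injective on the respective generators; the quadratic relation is forced by comparing $v^2$, which lives in degree $2n$, with the computed group there, giving $v^2=\alpha u^n+\beta vu^{n/2}$ with $\beta=0$ whenever $n$ is odd (the monomial $vu^{n/2}$ then does not exist) and $\alpha=0$ whenever $u^n$ already vanishes, i.e.\ $m\leq 2n$. I expect the genuine obstacle to lie in the case $n$ even, $m$ odd: there, exactly as in Subcase (a) of Case (III) of Theorem \ref{TS3}, the cup-product map at the top of the middle band has a one-dimensional kernel, and one must decide which of the two monomials $u^{(m+1)/2}$ or $vu^{(m-n+1)/2}$ survives. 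This dichotomy is precisely what separates conclusion (i) from conclusion (ii), and tracking the surviving generator correctly up to the top degree $n+m-1$ is the delicate step of the proof.
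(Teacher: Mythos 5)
Your proposal is correct and follows essentially the same route as the paper: the paper proves the $\S^3$ case (Theorem \ref{TS3}) in detail via the Gysin sequence together with Lemmas \ref{L1S13} and \ref{L2S3}, and disposes of the $\S^1$ case with exactly the adaptation you describe — period-two Gysin sequence, characteristic class $u\in H^2(X/G)$, the same trichotomy at degree $n$, and the same dichotomy (for $n$ even, $m$ odd) over which of $u^{\frac{m+1}{2}}$ or $vu^{\frac{m-n+1}{2}}$ survives, which separates conclusions (i) and (ii). Your Euler-characteristic justification that $n$ and $m$ cannot both be even is a harmless substitute for the paper's citation of the same fact.
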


From the above theorems, we observe that
\begin{remark}
	If $G=\S^d$, $d=1$ or 3, acts freely on a finitistic space $X\sim_2\S^n\times \S^m,1\leq n\leq m$, then $n\equiv d$(mod $(d+1)$) or $m\equiv d$(mod $(d+1)$) or $m-n\equiv d$(mod $(d+1)$).
\end{remark}

Using the above  cohomological calculations,  we get the mod 2 cohomology index of $X$ and obtain Borsuk-Ulam type results for $G=\S^d, d=1$  or 3, actions on a finitistic space $X\sim_2\S^n\times\S^m,1\leq n\leq m. $
\begin{corollary}
		Let $X$ be a finitistic free $G$-space with $X\sim_2\S^n\times \S^m,1\leq n\leq m$, where $G=\S^d$, $d=1$ or 3. Then, cohom-index$_GX$ is
	\begin{itemize}
		\item[(i)] $\frac{m-d}{d+1}$ if $m\equiv d$(mod d+1)
		\item[(ii)] $\frac{n+m-d}{d+1}$ if $(m-n)\equiv d$(mod d+1)
		\item[(iii)]   $\frac{n-d}{d+1}$ if $n\equiv d$(mod d+1).
	\end{itemize}
\begin{proof}
	By Theorem \ref{TS3} and \ref{TS1}, $u\in H^{d+1}(X/G)$ is the Steifel-Witney class of the sphere bundle $G\hookrightarrow X\to X/G$. The result follows from the definition of cohom-index$_G$.   
\end{proof}
\end{corollary}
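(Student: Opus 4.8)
The plan is to read the answer off directly from the ring structures computed in Theorems \ref{TS3} and \ref{TS1}, once the polynomial generator $u$ has been identified with the Stiefel-Whitney class $\omega$. First I would recall that cohom-index$_G X$ is by definition the largest integer $k$ for which $\omega^k\neq 0$, where $\omega\in H^{d+1}(X/G)$ is the top Stiefel-Whitney class (equivalently the mod $2$ Euler class) of the sphere bundle $G\hookrightarrow X\xrightarrow{p} X/G$. The key observation is that the class $u$ appearing in each presentation is precisely this $\omega$: in the proofs of Theorems \ref{TS3} and \ref{TS1} the element $u\in H^{d+1}(X/G)$ was defined as the image of $1\in H^0(X/G)$ under the homomorphism $\cup:H^0(X/G)\to H^{d+1}(X/G)$ of the Gysin sequence, and by the description of that sequence this map sends $1\mapsto 1\cup u$, the characteristic class of the bundle. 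Hence $u=\omega$, and the task reduces to locating the largest nonvanishing pure power of $u$ in each ring.

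Then I would proceed case by case through the three possible graded algebras. In every presentation the only relation that annihilates a pure power of $u$ is the leading monomial relation, namely $u^{(m+1)/(d+1)}=0$ in case (i), $u^{(n+m+1)/(d+1)}=0$ in case (ii), and $u^{(n+1)/(d+1)}=0$ in case (iii). The remaining relations all involve $v$ (such as $vu^{(m-n+1)/(d+1)}=0$ or the quadratic relation expressing $v^2$), so they do not kill any power of $u$ by itself. Consequently the largest nonvanishing power of $u$ is one less than the exponent in the leading relation, which yields cohom-index$_G X=(m+1)/(d+1)-1=(m-d)/(d+1)$ in case (i), $(n+m+1)/(d+1)-1=(n+m-d)/(d+1)$ in case (ii), and $(n+1)/(d+1)-1=(n-d)/(d+1)$ in case (iii), matching the stated formulas.

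The one point requiring care, and which I regard as the main (if minor) obstacle, is verifying that the powers $u^k$ for $k$ strictly below the stated bound are genuinely nonzero rather than being annihilated prematurely by a relation involving $v$. Here I would invoke the explicit additive bases recorded in the proofs of Theorems \ref{TS3} and \ref{TS1}: in each case the monomial $u^{i/(d+1)}$ is listed as a basis element of $H^i(X/G)$ throughout the relevant degree range below the top nonzero degree, so no such premature vanishing occurs. Finally, matching the congruence hypotheses $m\equiv d$, $m-n\equiv d$, or $n\equiv d \pmod{d+1}$ with cases (i), (ii), (iii) of the theorems completes the identification and the proof.
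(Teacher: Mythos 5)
Your proposal is correct and follows essentially the same route as the paper: identify $u$ with the Stiefel--Whitney (characteristic) class of the sphere bundle via the Gysin cup-product map, then read the largest nonvanishing power of $u$ off the ring presentations of Theorems \ref{TS3} and \ref{TS1}. The paper compresses this into two sentences; your additional verification that lower powers of $u$ are not killed by relations involving $v$ is just the detail implicit in the paper's appeal to the definition of cohom-index$_G$.
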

Next, we obtain Borsuk-Ulam type results and proof follows from corollary \ref{ind<coind}.
\begin{corollary}
	Let $X$ be a finitistic free $G$-space with $X\sim_2\S^n\times \S^m,1\leq n\leq m$, where $G=\S^d$, $d=1$ or 3. Then, there is no $G$-equivariant $f:\S^{(d+1)k+d}\to X$
\begin{itemize}
	\item[(i)] for   $k\geq\frac{m+1}{d+1}$, if cohom-index$_G=\frac{m-d}{d+1}$
	\item[(ii)] for   $k\geq\frac{n+m+1}{d+1}$, if cohom-index$_G=\frac{n+m-d}{d+1}$
	\item[(iii)] for   $k\geq\frac{n+1}{d+1}$, if cohom-index$_G=\frac{n-d}{d+1}$
\end{itemize}
\end{corollary}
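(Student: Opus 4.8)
The plan is to combine the computation of the mod 2 cohomology index supplied by the preceding corollary with the inequality $\text{ind}_{G}X\leq \text{cohom-ind}_{G}X$ established in Corollary \ref{ind<coind}. No new geometric input is needed; the whole statement is a bookkeeping consequence of those two facts together with the definition of the index.

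First I would recall that, by the very definition of $\text{ind}_{G}X$, the existence of a $G$-equivariant map $f:\S^{(d+1)k+d}\to X$ forces $\text{ind}_{G}X\geq k$. Feeding this into Corollary \ref{ind<coind}, any such $k$ must satisfy $k\leq \text{ind}_{G}X\leq \text{cohom-ind}_{G}X$. Taking the contrapositive, no $G$-equivariant map $f:\S^{(d+1)k+d}\to X$ can exist once $k>\text{cohom-ind}_{G}X$. This is the single structural statement from which all three bullets will follow.

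Next I would substitute the three explicit values of $\text{cohom-ind}_{G}X$ coming from the previous corollary, namely $\frac{m-d}{d+1}$, $\frac{n+m-d}{d+1}$, and $\frac{n-d}{d+1}$ under the respective congruence hypotheses. In each case the congruence condition guarantees that $\text{cohom-ind}_{G}X$ is an integer, so the strict inequality $k>\text{cohom-ind}_{G}X$ is equivalent to $k\geq \text{cohom-ind}_{G}X+1$. A one-line arithmetic simplification then rewrites the threshold in the displayed form: for case (i), $\text{cohom-ind}_{G}X+1=\frac{m-d+(d+1)}{d+1}=\frac{m+1}{d+1}$, giving the bound $k\geq \frac{m+1}{d+1}$, and cases (ii) and (iii) reduce to $\frac{n+m+1}{d+1}$ and $\frac{n+1}{d+1}$ by the identical manipulation.

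There is no real obstacle to overcome: the mathematical content is entirely carried by Corollary \ref{ind<coind} and by the index calculation of the preceding corollary, so the only thing to verify is the elementary point that each cohomology index is an integer and that adding $1$ to it produces exactly the stated lower bound. I would therefore present the argument compactly, stating the general nonexistence principle once and then listing the three arithmetic reductions.
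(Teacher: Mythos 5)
Your proposal is correct and follows essentially the same route as the paper, which simply observes that the result "follows from Corollary \ref{ind<coind}": existence of an equivariant map $f:\S^{(d+1)k+d}\to X$ forces $k\leq \text{ind}_G X\leq \text{cohom-ind}_G X$, and the stated thresholds are just $\text{cohom-ind}_G X+1$ rewritten. Your only minor imprecision is attributing integrality of $\text{cohom-ind}_G X$ to the congruence conditions (in case (ii) the congruence alone does not make $\frac{n+m-d}{d+1}$ an integer), but integrality is automatic since the cohomology index is an integer by definition, so nothing is lost.
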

\section{Examples}
\noindent \textbf{5.1.}
	Let $G=\S^{d}$, $d=1$ or 3, acts on  $\S^{(d+1)n+d}$ by the standard action and trivially  on $\S^m$. Then $G$  acts freely on $X=\S^{(d+1)n+d}\times \S^m$ by the diagonal action. Note that $X/G=\mathbb{FP}^n\times \S^m$, where $\mathbb{F}=\C$ or $\mathbb{H}$ where $d=1$ or $3$.  So, $H^*(X/G)=\Z_2[u,v]/\langle u^{n+1}, v^2\rangle, \text{ where } \deg u=d+1$ and $\deg v=m$. This realizes case(i)\&(iii) of Theorem \ref{TS3} and \ref{TS1}. 	 So, cohom-index$_G X=n$. By above corollary, there is no $G$-equivariant map $f:\S^{(d+1)k+d}\to X$ for $k\geq n+1$. As $g:\S^{(d+1)n+d}\to X$ defined as $g(x)=(x,y_0)$  is $G$-equivariant map where $y_0\in \S^m$ be any point. Thus, ind$_G X=n$.

\noindent \textbf{5.2.}
	Let $(a_0,a_1,\cdots, a_n)$ and $(b_0,b_1,\cdots,b_m)$ be sequences of integers such that $\gcd(a_i,b_j)=1$ for all $0\leq i\leq n$ and $0\leq j\leq m$. Then, $G=\S^1$ acts freely on $X=\S^{2n+1}\times\S^{2m+1}$ by
	$\lambda.(z,w)=((\lambda^{a_0}z_0,\lambda^{a_1}z_1,\cdots,\lambda^{a_n}z_n),(\lambda^{b_0}w_0,\lambda^{b_1}w_1,\cdots,\lambda^{b_m}w_m))$
	where $z=(z_0,z_1,\cdots,z_n)$, $w=(w_0,w_1,\cdots,w_m)$ and $z_i, w_i\in \C$. If $n=0$ then $X/G=L_{a_0}^{2m+1}(b_0,b_1,$\\
	$,\cdots,b_m)$. So, for $a_0$ odd,  $X/G\sim_2 \mathbb{S}^{2m+1}$; for $p$  even but $4\not| p$, $X/G\sim_2 \mathbb{RP}^{2m+1}$; and for $4| p$,    $X/G\sim_2 \S^1\times\mathbb{CP}^m.$  This realizes case(i) of theorem \ref{TS1}.

\bibliographystyle{amsplain}

\begin{thebibliography}{100}

\bibitem{ConnerFloyd1960} P. E. Conner\ and\ E. E. Floyd, {\it Fixed point free involutions and equivariant maps I}, Bull. Amer. Math. Soc. {\bf 66} (1960), 416--441. MR0163310

\bibitem{ConnerFloyd1962} P. E. Conner\ and\ E. E. Floyd, { \it Fixed point free involutions and equivariant maps II}, Trans. Amer. Math. Soc. {\bf 105} (1962), 222--228. MR0143208


\bibitem{Deo2013} S. Deo, {\it Index of a finitistic space and a generalization of the topological central point theorem}, J. Ramanujan Math. Soc. {\bf 28} (2013), no.~2, 223--232. MR3097427


\bibitem{Dieck} T. tom Dieck, {\it Transformation groups}, De Gruyter Studies in Mathematics, 8, Walter de Gruyter \& Co., Berlin, 1987. MR0889050

\bibitem{Dotzel} R. M. Dotzel, T. B. Singh\ and\ S. P. Tripathi, {\it The cohomology rings of the orbit spaces of free transformation groups of the product of two spheres}, Proc. Amer. Math. Soc. {\bf 129} (2001), no.~3, 921--930. MR1712925


\bibitem{Husseni1988}E. Fadell\ and\ S. Husseini, {\it An ideal-valued cohomological index theory with applications to Borsuk-Ulam and Bourgin-Yang theorems}, Ergodic Theory Dynam. Systems {\bf 8$^*$} (1988), Charles Conley Memorial Issue, 73--85. MR0967630

\bibitem{Hara2005} Y. Hara, {\it The degree of equivariant maps}, Topology Appl. {\bf 148} (2005), no.~1-3, 113--121. MR2118958


\bibitem{Hatcher} A. Hatcher, {\it Algebraic topology}, Cambridge University Press, Cambridge, 2002. MR1867354


\bibitem{dale fibre bundle} D. Husemoller, {\it Fibre bundles}, third edition, Graduate Texts in Mathematics, 20, Springer-Verlag, New York, 1994. MR1249482



 
\bibitem{Jaworowski2002}J. Jaworowski, {\it The index of free circle actions in lens spaces}, Topology Appl. {\bf 123} (2002), no.~1, 125--129. MR1921653


\bibitem{JKaur2015}J. Kaur\ and\ H. K. Singh, {\it On the existence of free action of $\Bbb S^3$ on certain finitistic ${\rm mod}\, p$ cohomology spaces}, J. Indian Math. Soc. (N.S.) {\bf 82} (2015), no.~3-4, 97--106. MR3467619

\bibitem{Livesay}G. R. Livesay, {\it Fixed point free involutions on the $3$-sphere}, Ann. of Math. (2) {\bf 72} (1960), 603--611. MR0116343




\bibitem{Matschke} B. Matschke, {\it A parametrized version of the Borsuk-Ulam-Bourgin-Yang-Volovikov theorem}, J. Topol. Anal. {\bf 6} (2014), no.~2, 263--280. MR3191652





\bibitem {Mc} 
J. McCleary, {\it A user's guide to spectral sequences}, second edition, Cambridge Studies in Advanced Mathematics, 58, Cambridge University Press, Cambridge, 2001. MR1793722

\bibitem{Ozeki} H. Ozeki\ and\ F. Uchida, {\it Principal circle actions on a product of spheres}, Osaka Math. J. {\bf 9} (1972), 379--390. MR0353348


\bibitem{Rice} P. M. Rice,{\it Free actions of $Z\sb{4}$ on $S\sp{3}$}, Duke Math. J. {\bf 36} (1969), 749--751. MR0248814

\bibitem{Ritter1973} G. X. Ritter, {\it Free $Z\sb{8}$ actions on $S\sp{3}$}, Trans. Amer. Math. Soc. {\bf 181} (1973), 195--212. MR0321078
	
\bibitem{Ritter1974}G. X. Ritter, {\it Free actions of cyclic groups of order $2\sp{n}$ on $S\sp{1}\times S\sp{2}$}, Proc. Amer. Math. Soc. {\bf 46} (1974), 137--140. MR0350768

\bibitem{Rubinstein} J. H. Rubinstein,{\it Free actions of some finite groups on $S\sp{3}$ I}, Math. Ann. {\bf 240} (1979), no.~2, 165--175. MR0524664


\bibitem{Hemant2012} H. K. Singh\ and\ T. B. Singh,{\it The cohomology of orbit spaces of certain free circle group actions}, Proc. Indian Acad. Sci. Math. Sci. {\bf 122} (2012), no.~1, 79--86. MR2909586


\bibitem{Stolz1989} S. Stolz,{\it The level of real projective spaces}, Comment. Math. Helv. {\bf 64} (1989), no.~4, 661--674. MR1023002

\bibitem{Su1963} J. C. Su,{\it Transformation groups on cohomology projective spaces}, Trans. Amer. Math. Soc. {\bf 106} (1963), 305--318. MR0143839


 
\bibitem{Tanaka2003} R. Tanaka,{\it On the index and co-index of sphere bundles}, Kyushu J. Math. {\bf 57} (2003), no.~2, 371--382. MR2050092

\bibitem{Tanaka2007} R. Tanaka,{\it A stability theorem for the index of sphere bundles}, Bull. Belg. Math. Soc. Simon Stevin {\bf 14} (2007), no.~1, 177--182. MR2327334



\bibitem{Tao} Y. Tao,{\it On fixed point free involutions of $S\sp{1}\times S\sp{2}$}, Osaka Math. J. {\bf 14} (1962), 145--152. MR0140092

\bibitem{Jack} J. Ucci,{\it A note on fixed point free involutions and equivariant maps}, Proc. Amer. Math. Soc. {\bf 31} (1972), 297--298. MR0298656




\bibitem{Yang1954} C. T. Yang, {\it On theorems of Borsuk-Ulam, Kakutani-Yamabe-Yujob\^{o} and Dyson. I}, Ann. of Math. (2) {\bf 60} (1954), 262--282. MR0065910

\bibitem{Yang1955} C. T. Yang,{\it On theorems of Borsuk-Ulam, Kakutani-Yamabe-Yujob\^{o} and Dyson. II}, Ann. of Math. (2) {\bf 62} (1955), 271--283. MR0072470



\bibitem{Volvikov2000} A. Yu. Volovikov,{\it On the index of $G$-spaces}, Sb. Math. {\bf 191} (2000), no.~9-10, 1259--1277; translated from Mat. Sb. {\bf 191} (2000), no. 9, 3--22. MR1805595

\end{thebibliography}

\end{document}